\newenvironment{aside}{\setbox0\hbox\bgroup }
  {\egroup}
\def\ShowAsides{%
  
  }
\theoremstyle{plain}
\newtheorem{thm}{Theorem}[section]
\newtheorem{cor}[thm]{Corollary}
\newtheorem{prop}[thm]{Proposition}
\newtheorem{lem}[thm]{Lemma}
\theoremstyle{definition}
\newtheorem*{defn}{Definition}
\newtheorem*{rem}{Remark}
\numberwithin{equation}{section}
\renewcommand\labelenumi{\theenumi.}
\let\le\leqslant
\let\ge\geqslant
\let\sm\smallsetminus
\renewcommand\sum{\DOTSB\sum@\limits}
\renewcommand\prod{\DOTSB\prod@\limits}
\newcommand\blank{\mathord{\hbox to 1.5ex{\hrulefill}}\,}
\let\opn\operatorname
\DeclareMathOperator\Lie{Lie}
\DeclareMathOperator\Center{C}
\DeclareMathOperator{\im}{Im}
\DeclareMathOperator{\GL}{GL}
\DeclareMathOperator{\Sp}{Sp}
\DeclareMathOperator{\SL}{SL}
\DeclareMathOperator{\SO}{SO}
\DeclareMathOperator{\Spin}{Spin}
\let\mf\mathfrak
\def\q{\mf q}
\def\p{\mf p}
\def\m{\mf m}
\def\X{\mathscr X}
\newcommand\Z{\mathbb Z}
\newcommand\N{\mathbb N}
\newcommand\Q{\mathbb Q}
\let\ph\varphi
\let\eps\varepsilon
\let\wt\widetilde
\let\ovl\overline
\newcommand\conj[2]{\vphantom{#2}^{#1}\!#2}
\title
{Normal structure of isotropic reductive groups over rings}
\author{Anastasia Stavrova}
\email{anastasia.stavrova@gmail.com}
\address{St. Petersburg Department of Steklov Mathematical Institute, nab. r. Fontanki 27, 191023 St. Petersburg, Russia}
\author{Alexei Stepanov}
\email{stepanov239@gmail.com}
\address{St. Petersburg State University, Department of Mathematics and Computer Science,
14th Line V.O. 29B, 199178 St. Petersburg, Russia}
\thanks
{The second author was supported by Russian Science Foundation grant~17-11-01261.}
\keywords
{isotropic reductive groups; parabolic subgroup; elementary subgroup; congruence subgroup; unipotent element; generic element;
universal localization; normal structure}
\subjclass[2010]{20G35, 19B37, 20H05}
\begin{document}
\begin{abstract}
The paper studies the lattice of subgroups of an isotropic reductive
group $G(R)$ over a commutative ring $R$, normalized by the elementary subgroup $E(R)$.
We prove the sandwich classification
theorem for this lattice under the assumptions that the isotropic rank of $G$ is at least 2
and the structure constants are invertible in $R$. The theorem asserts that the lattice splits into a disjoint union
of sublattices (sandwiches) $E(R,\q )\le\dots\le C(R,\q )$ parametrized by the ideals $\q$
of $R$, where $E(R,\q )$ denotes the relative elementary subgroup and $C(R,\q)$ is
the inverse image of the center under the natural homomorphism $G(R)\to G(R/\q)$.
The main ingredients of the proof are the ``level computation'' by the first author and
the generic element method developed by the second author.
\end{abstract}

\maketitle

\section{Introduction}

Let $G$ be a reductive group scheme over a unital commutative ring $R$ in the sense of~\cite{SGA}.
The famous result of J. Tits~\cite{Tits64}
establishes that if $R$ is a field, and $G$ has no normal closed connected smooth $R$-subgroups, then its
group of $R$-points $G(R)$ is very
close to being simple as an abstract group (except in a few cases where $R=\mathbb{F}_2$ or $\mathbb{F}_3$).
Namely, $G(R)$ contains a ``large'' normal subgroup $G(R)^+$ whose central
quotient is simple. If $R$ is a finite field, the corresponding simple group $G(R)^+/\Center\bigl(G(R)^+\bigr)$ is a finite
simple group of Lie type, and in fact such groups constitute the largest family  in the classification of finite simple
groups~\cite{WilsonBook}.

If $R$ is no longer a field, then every proper ideal $\q$ of $R$ determines a normal subgroup $G(R,\q)$ of $G(R)$ called
the principal congruence subgroup of level $\q$; this subgroup is the kernel
of the natural homomorphism $\rho_\q:G(R)\to G(R/\q)$. Thus $G(R)$ is no longer simple, and its lattice of normal subgroups
is at least as rich as the lattice of ideals in $R$. H. Bass~\cite{Bass64} established that,
at least if $G=\GL_r$ and $r>\max(\dim R,2)$, every normal subgroup in $G(R)$ is sandwiched
between the ``small'' elementary congruence subgroup $E(R,\q)$ and the ``large'' full congruence subgroup
$C(R,\q)$, to be defined below (the discription for $\GL_2$ is very different~\cite{CoKeSL2}). This revelation was instrumental in the proof of the congruence subgroup problem
for $\SL_n$ ($n\ge 3$) and $\Sp_{2n}$ ($n\ge 2)$ by J. Mennicke, H. Bass, M. Lazard, J. Milnor, and J.-P.
Serre~\cite{Mennicke-cong,BassLazardSerre,BassMilnorSerre}. Further work on the congruence subgroup problem for Chevalley groups~\cite{Matsumoto69},
as well as progress in algebraic $K$-theory, inspired extensions of
this normal structure result to split reductive groups over general commutative rings.
In particular, L.\,Vaserstein~\cite{VasersteinChevalley}
proved the standard normal sructure of Chevalley groups $G$ of rank $\ge 2$
over commutative rings with the only condition that the structure constants of $G$ are invertible in $R$.
The congruence subgroup problem for absolutely almost simple simply connected algebraic groups of isotropic rank $\ge 2$
over global fields was solved by
M. S. Raghunathan~\cite{Ragh76}, who obtained, as an important intermediate
step, a description of normal subgroups in arithmetic subgroups of these algebraic groups.
However, the description of normal subgroups for isotropic reductive groups over general rings has been known only in a few
distinct cases, such as orthogonal and hyperbolic unitary groups~\cite{Vas88,ZuhongSubnormal}.
The goal of the present paper is to complete this line of thought by proving that
the standard description of normal subgroups holds for all reductive groups $G$
of isotropic rank $\ge 2$, under the same necessary assumptions on $R$ as for split groups.

To be more precise, we say that $G$ has isotropic rank $\ge n$, if every
semisimple normal $R$-subgroup of $G$ contains an $n$-dimensional split $R$-torus
$(\mathbb{G}_{m,R})^n$. If the isotropic rank is $\ge 1$, then $G$ contains a pair of opposite
strictly proper
parabolic $R$-subgroups
$P^\pm$~\cite{SGA}, and one defines the elementary subgroup $E_P(R)$ as the subgroup of $G(R)$
generated by $U_{P^+}(R)$ and $U_{P^-}(R)$, where $U_{P^\pm}$ denotes the unipotent radical  of $P^\pm$.
If, moreover, the isotropic rank of $G$ is $\ge 2$, the main result of~\cite{PetStavIso} implies that $E(R)=E_P(R)$
is independent of the choice of $P^\pm$ and is normal in $G(R)$ (see \S~\ref{sec:elem} for the details).
If $G=\GL_n$, $n\ge3$, then $E(R)$ is the usual elementary subgroup
generated by the elementary transvections $e+te_{ij}$, $1\le i\neq j\le n$, $t\in R$
(here $e$ denotes the identity matrix whereas $e_{ij}$ is the matrix with 1 at position $(i,j)$
and zeros elsewhere).
If $R$ is a field, then $E(R)=G(R)^+$ is the above-mentioned group of J.\,Tits.

Assume that the isotropic rank of $G$ is $\ge 2$. For any ideal $\q$ of $R$, let $U_{P^\pm}(\q)$
be the kernel of the homomorphism $\rho_\q$ restricted to $U_{P^\pm}(R)$. Denote by
$E_P(R,\q)$ the normal closure of the subgroup generated by $U_{P^+}(\q)$ and $U_{P^-}(\q)$ in $E(R)$.
The full congruence subgroup $C(R,\q)$ is the full preimage of the center of $G(R/\q)$ under $\rho_\q$.

For any maximal ideal $\m$ of $R$, denote by $\overline{R/\m}$ the algebraic closure of the field
$R/\m$. The group $G_{\overline{R/\m}}$ is a reductive algebraic group in the usual sense~\cite{BorelBook}, and thus
has a root system $\Phi$ in the sense of Bourbaki~\cite{Bourbaki4-6}.
Note that $G_{\overline{R/\m}}$ has no closed connected normal smooth $R$-subgroups if and only if $\Phi$ is irreducible.
The structure constants of $\Phi$ are, by definition, the integers $\pm 1$,
together with $\pm 2$, if $\Phi$
is of type $B_n,C_n,F_4$, or $\pm 2,\pm 3$, if $\Phi$ is of type $G_2$.

The main result of the present paper is the following theorem.

\begin{thm}\label{thm:main}
Let $G$ be a reductive group scheme over a ring $R$ such that its isotropic rank is $\ge 2$.
Suppose that for every maximal ideal $\m$ of $R$ the root system of
$G_{\overline{R/\m}}$ is irreducible, and its structure constants are invertible in $R$.
Let $P$ be a proper parabolic subgroup of $G$.
Then
\def\labelenumi{\normalfont{(\roman{enumi})}}
\begin{enumerate}
\item
For any ideal $\q$ of $R$ one has $E_P(R,\q)=[G(R,\q),E(R)]$. In particular,
$E_P(R,\q)=E(R,\q)$ is independent of the choice of a parabolic $R$-subgroup $P$.\\
\item
For any subgroup $H\le G(R)$
normalized by $E(R)$, there exists a unique ideal $\q$ in $R$ such that
$$E(R,\mf q)\le H\le C(R,\q).$$
\end{enumerate}
\end{thm}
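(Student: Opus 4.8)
The theorem has two parts, and the standard strategy for sandwich classification is to prove part (ii) first modulo the "level computation," then deduce part (i) essentially as a corollary (or vice versa, interleaving them). Let me think about the proof structure here.

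For part (ii): Given $H \le G(K)$ normalized by $E(K)$, I want to find the ideal $\mathfrak{q}$. The natural candidate is the "level" of $H$: roughly, $\mathfrak{q}$ should be the ideal generated by all the "coordinates" of elements $h \in H$ when written in terms of root subgroup coordinates. More precisely, using the parabolic $P$ and its Levi decomposition, there's a notion of level $\mathfrak{q} = \mathfrak{q}(H)$ such that $U_{P^\pm}(\mathfrak{q})$ is essentially the projection data.

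Key steps:
1. **Define the level.** For $H$ normalized by $E(K)$, define $\mathfrak{q} = \mathfrak{q}(H)$ to be the smallest ideal such that $H \subseteq C(K, \mathfrak{q})$... no wait, that's circular. Let me think again. The level is defined via: look at $[H, E(K)]$ or at commutators $[h, U_{P^+}(K)]$ for $h \in H$; these land in $U_{P^+}$ (roughly), and the ideal they generate (via the coordinates) is $\mathfrak{q}$.

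2. **Upper bound $H \le C(K,\mathfrak{q})$.** This should follow because modulo $\mathfrak{q}$, the image of $H$ in $G(K/\mathfrak{q})$ commutes with the image of $E(K)$, which is $E(K/\mathfrak{q})$, and since $E(K/\mathfrak{q})$ is "big" (by the isotropic rank $\ge 2$ and irreducibility), its centralizer in $G(K/\mathfrak{q})$ is the center. This uses a centralizer computation — something like: the centralizer of $E(K')$ in $G(K')$ is the center of $G(K')$ for any ring $K'$ (or at least reduces to it locally via the maximal ideal and irreducibility of the root system).

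3. **Lower bound $E(K,\mathfrak{q}) \le H$.** This is the heart of the matter. Here one needs: (a) the level computation from Stavrova — that $[H, E(K)]$ contains enough elementary generators of level $\mathfrak{q}$; and (b) the universal localization method of Stepanov — to bootstrap from "locally $H$ contains the elementary generators" to "globally $E(K,\mathfrak{q}) \le H$." The localization handles the passage from statements over localizations $K_{\mathfrak{m}}$ (where more tools are available, e.g. the group becomes simpler or the level computation is cleaner) back to $K$.

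For part (i): $E_P(K,\mathfrak{q}) = [G(K,\mathfrak{q}), E(K)]$. The inclusion $\subseteq$ is not too hard: $U_{P^\pm}(\mathfrak{q}) \subseteq G(K,\mathfrak{q})$, and one shows $[U_{P^\pm}(\mathfrak{q}), E(K)]$ and hence the normal closure of $\langle U_{P^\pm}(\mathfrak{q})\rangle$ in $E(K)$ lies in $[G(K,\mathfrak{q}), E(K)]$ — actually one needs $U_{P^\pm}(\mathfrak{q}) \subseteq [G(K,\mathfrak q), E(K)]$ up to the normal closure, which requires a bit of commutator juggling (writing a unipotent element of level $\mathfrak q$ as a product of commutators). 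The inclusion $\supseteq$: apply part (ii) with $H = [G(K,\mathfrak{q}), E(K)]$, which is normalized by $E(K)$; its level is exactly $\mathfrak{q}$ (need: it's $\le \mathfrak q$ since $G(K,\mathfrak q)$ has level $\le \mathfrak q$; and $\ge \mathfrak q$ by the level computation), so $E(K,\mathfrak{q}) \le [G(K,\mathfrak{q}), E(K)]$.

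**Main obstacle.** The crux is step 3 of part (ii) — the lower bound — and more specifically making the "level computation + universal localization" synergy work. The level computation tells you that *over a local ring* (or after suitable localization), given $h \in H$ with some nonzero coordinate, conjugating by elementary elements produces an elementary generator $x_\alpha(t)$ with $t$ generating (a chunk of) the level ideal. Universal localization is needed because $\mathfrak q$ is globally generated by such coordinates but each individual coordinate only "sees" a localization; one must glue. The technical friction: the parabolic $P$ need not be minimal, root subgroups $U_\alpha$ for relative roots $\alpha$ carry a module (not just ring) structure, and the "structure constants invertible" hypothesis is exactly what's needed to run the commutator formulas (Chevalley-type commutator relations for relative root subgroups) that convert "$h$ has coordinate $t$ in $U_\alpha$" into "$x_\beta(t') \in H$ for various $\beta, t'$." Getting the relative commutator formulas and the level bookkeeping to interact cleanly with localization — so that the finitely many localizations covering $\operatorname{Spec} K$ can be patched — is where the real work lies.
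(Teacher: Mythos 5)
Your broad picture — choose the level ideal $\q$, prove a lower and an upper bound, handle part~(i) by a commutator argument — matches the general shape of sandwich classification. But the proposal misallocates where the difficulty lives and omits the technique that is the actual engine of the paper's proof, so as a blueprint it would not close the argument.

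The two inclusions of the sandwich are not, in the paper, symmetrically hard. Once Theorem~\ref{thm:cong-N} supplies an ideal $\q$ with $H\cap X_\alpha(V_\alpha)=X_\alpha(\q V_\alpha)$ for all $\alpha$, the lower bound $E_P(K,\q)\le H$ is \emph{immediate}: $U_{P^\pm}(\q)$ is generated by the $X_\alpha(\q V_\alpha)$, and $H$ is normalized by $E(K)$. There is no localization or bootstrapping involved. You present this as ``the heart of the matter'' and attribute to it a ``level computation $+$ universal localization synergy''; that mislocates both the level computation (which is a global statement, not a local one) and the universal localization (which enters elsewhere, see below). Conversely, your upper bound argument is where the gap is. You assert that the image of $H$ in $G(K/\q)$ commutes with $E(K/\q)$, ``which should follow'' from the definition of the level; it does not. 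The condition $H\cap X_\alpha(V_\alpha)=X_\alpha(\q V_\alpha)$ says nothing directly about $[H,E(K)]\bmod\q$. The paper instead runs a contradiction (Proposition~\ref{lem:reduction}): if $\rho_\q(H)$ were noncentral, the extraction hypothesis over $K/\q$ would produce a root unipotent in $\rho_\q(H)$, and the generalized Chevalley formula plus Lemma~\ref{lem:ABe} then manufactures a root element $X_{\tilde\alpha}(w)\in H$ with $w\notin\q V_{\tilde\alpha}$, contradicting the choice of level. So both inclusions are reduced to the single statement: \emph{any $E(K)$-normalized subgroup not contained in $\Center(G(K))$ contains some nontrivial $X_\alpha(v)$.}

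That extraction statement is the real crux, and your proposal never addresses how to prove it. The paper's mechanism is the generic element technique of Section~8: working in the coordinate ring $A=K[G]$ with the generic element $g=\mathrm{id}_A$, cover the main Gauss cell $\Omega_P$ by principal opens $\operatorname{Spec}A_{s_i}$; use Noetherianness (Lemma~\ref{lem:Bak}) and a clearing-denominators lemma (Lemma~\ref{lem:ClearDenom}) to build explicit elements $c_{ij}=[g,d^{-1}]\in g^{E(A)}\cap\Omega_P(A)$; and define a subscheme $S$ cutting out where all $h(c_{ij})$ are central, with the key property that $S(F)$ never contains $E(F)$ for a field $F$. Then for $H$: if some $h(c_{ij})\notin\Center(G(K))$, one has a noncentral element of $H$ trapped in the Gauss cell, and Lemma~\ref{lem:InPQ} extracts a unipotent by a height induction on relative roots, using Lemmas~\ref{lem:centr-beta}, \ref{lem:small-levi-b}, and \ref{lem:parab-centr-root}; if all $h(c_{ij})$ are central, then $H\subseteq S(K)$, and since $E(K/\m)\not\subseteq S(K/\m)$, Tits' simplicity over each residue field forces $\bar H$ to be central mod $\m$, whence $[H,E(K)]\subseteq G(K,\operatorname{Rad}K)$, and Corollary~\ref{cor:UnderRad} (again via the Gauss cell, over the semilocal-like situation) extracts a unipotent from there. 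This whole chain is where ``universal localization'' actually lives, and none of it appears in your sketch.

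Two smaller points. First, part~(i) in the paper is \emph{not} derived from part~(ii); it is proved independently as Lemma~\ref{lem:stand-comm-formula}, with $[G(K,\q),E(K)]\le E_P(K,\q)$ coming from the splitting principle for isotropic groups and the reverse inclusion from the level theorem plus the Chevalley commutator formula for a maximal root. Your alternate route through (ii) is plausible, but it is a different (and circular-feeling, since (ii) quotes (i) via $E(K,\q)$) organization. Second, the reduction from arbitrary $K$ to Noetherian connected $K$ is a real step in the paper (via finitely generated subrings) and should be stated; your proposal implicitly works over a fixed $K$ throughout.
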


Since groups of points of reductive group schemes include, in particular, the linear matrix groups
$\GL_n(R)$, $\SL_n(R)$, $\Sp_{2n}(R)$, as well as spinor and special orthogonal groups $\Spin(q)(R)$, $\SO(q)(R)$, where
$q$ is a non-degenerate quadratic $R$-form, the study of their normal structure has a very long history.
Thus, it would take a separate survey paper to describe it in full, and below we only list
a few milestone results.

\begin{itemize}
\item
Simplicity of the groups $\opn{PSL}_n(F)$ was proved by Camille Jordan around 1870 (for prime fields)
and in early 1900 by Leonard Dickson (for all finite fields).
\item
A similar result for the $\opn{GL}_n$ over a skew-field was obtained by J. Dieudonne in 1943. In~\cite{DieuClassiques}
he established the simplicity of split groups over arbitrary fields. The first uniform proof of their simplicity was given by
C.\,Chevalley~\cite{ChevalleySimpGroups}.
\item
The problem over rings distinct from fields was first approached by J. Brenner~\cite{Brenner38,Brenner44,Brenner60} and
later by W. Klingenberg~\cite{KlingenbergGL,KlingenbergSO,KlingenbergSp}. They studied split groups of classical types
over quotients of $\mathbb{Z}$ and general local rings respectively.

\item As mentioned above, J. Tits established the simplicity for isotropic reductive groups over fields in~\cite{Tits64}.
\item For the general linear group over an arbitrary ring,
the normal structure theorem was proved by H. Bass~\cite{Bass64}
under the stable range condition. The result was generalized to other quasi-split classical
groups by H. Bass himself~\cite{BassUnitary} and A. Bak~\cite{BakClassical}.

\item H. Bass, M. Lazard, J.-P. Serre, and J. Milnor~\cite{BassLazardSerre,BassMilnorSerre}
elucidated the normal structure of $\SL_n$ and $\Sp_{2n}$ over
rings of integers of algebraic number fields, and stated the congruence subgroup problem.
It was solved for $\SL_2$ by J.-P. Serre~\cite{Serre-cong} and for all split groups by H. Matsumoto~\cite{Matsumoto69}.

\item The stable range condition for $\opn{GL}_n$ was removed by
J.\,Wilson~\cite{WilsonNormal} ($n\ge4$) and I.\,Golubchik~\cite{GolubchikNormal} ($n\ge3$).
Using the ideas of H.\,Bass and Suslin's theorem on the normality of the elementary group~\cite{SuslinSerreConj},
Z.\,Borewich and N.\,Vavilov~\cite{BV84} gave a simpler prove of the Wilson--Golubchik theorem.

\item In 1974 E.\,Abe and K.\,Suzuki proved the normal structure theorem for all Chevalley groups over local
rings~\cite{AbeSuzuki}.

\item In 1976 M.S. Raghunathan established the congruence subgroup problem for groups of isotropic rank $\ge 2$ over
a global field~\cite{Ragh76}. In 1986 he improved this result, weakening the isotropy conditions~\cite{Ragh86}.

\item In 1979 G. Margulis~\cite{Margulis79} proved his celebrated theorem on lattices in isotropic groups over local
fields. His work was based on earlier work of V. P. Platonov~\cite{Platonov69}.

\item In 1980 R. Bix established the normal structure theorem for isotropic groups of type $\conj1E_{6,2}^{28}$ over
local rings~\cite{Bix80}.

\item
L.\,Vaserstein in~\cite{VasersteinGLn} combined the results of H.\,Bass, J.\,Wilson and I.\,Golubchik
proving the standard normal structure of the $\opn{GL}_n$ under a local stable rank condition.

\item
After the result of G.\,Taddei on normality of the elementary group in a Chevalley group
L.\,Vaserstein~\cite{VasersteinChevalley} proved the standard normal sructure of Chevalley groups of rank $\ge 2$
over commutative rings
provided that the structure constants were invertible.
The latter condition was removed by E.\,Abe in~\cite{AbeNormal}.

\item In 1988 L. Vaserstein proved the normal structure theorem for isotropic orthogonal groups over commutative
rings~\cite{Vas88}.

\item The result of E.\,Abe has three exceptions where the elementary group is not perfect:
types $C_2$ and $G_2$ if the gound ring has a residue field of 2 elements, and type $A_1$.
All the exceptions were considered by D.\,Costa and G.\,Keller in a series of papers~\cite{CoKeSL2,CoKeSp,CoKeG2}.
Of course, for type $A_1$ the ground ring must be low-dimensional with many units, as the
 description of the normal structure of $\opn{SL}_2(\mathbb Z)$ seems to be an unrealistic problem.

\item In 2010 Zuhong Zhang~\cite{ZuhongSubnormal} established the normal structure theorem for even hyperbolic
unitary groups over a commutative form ring with invertible $2$. The latter assumption was removed by Hong You~\cite{HongYou12}.
A shorter proof was obtained recently by R.~Preusser~\cite{Preusser18}.
\end{itemize}

The main ingredients of the proof of Theorem~\ref{thm:main} is a level computation made by the first author
in~\cite{StavCongruence} and the generic element method developed by the second author in~\cite{StepUniloc}.
Our proof basically follows the plan developed by the second author in~\cite{StepDecomp}.

\section{Basic notation and conventions}\label{sec:notation}
Let $x,y,z$ be elements of an abstract group $H$. Denote by $\conj yx=yxy^{-1}$ the left
conjugate to $x$ by $y$. Sometimes we use also the right conjugate $x^y=y^{-1}xy$ to
$x$ by $y$. The commutator $xyx^{-1}y^{-1}$ is denoted by $[x,y]$.
In the sequel we frequently use the following commutator identity, which can be easily verified
by a straightforward calculation.
%
\begin{equation}\label{xyzz-1}
[zy,x]^z=\conj yx\cdot(x^{-1})^z=[y,x]\cdot[x,z^{-1}]
\end{equation}

Let $S$ be a subset of $H$. By $\langle S\rangle$ we denote the subgroup
spanned by $S$. For  subsets $X$ and $Y$ of $H$
$[X,Y]$ stands for the subgroup of $H$ generated by all the commutators
$[x,y]$, $x\in X$, $y\in Y$. Note that if $Y$ is a subgroup, then $[X,Y]$ is normalized by $Y$
(apply~\eqref{xyzz-1} to the case, where $x\in X$ and $z$ and $zy$ are arbitrary elements of $Y$).
If both $X$ and $Y$ are subgroups, then we denote by $X^Y$ the subgroup of $H$
generated by $x^y$ for all $x\in X$ and $y\in Y$. In other words, $X^Y$ is the smallest subgroup
containing $X$ and normalized by $Y$.
The centralizer of a subgroup $H'\le H$ is denoted by $\Center_H(H')$ and
we write $\Center(H)=\Center_H(H)$ for the center of $H$.

All rings and algebras are assumed to be commutative and to contain a unit.
All homomorphisms preserve unit elements.
The multiplicative group of a ring $R$ is denoted by $R^\times$.
As usual, $\opn{Spec} R$ denotes the prime spectrum of $R$.
For $\p\in\opn{Spec} R$ denote by $R_{\p}=(R\sm\p)^{-1}R$
the localization of $R$ at $\mf p$ and by $k(\p)$ the residue
field $R_\p/\p R_\p$. The algebraic closure of $k(\p)$ is denoted by $\overline{k(\p)}$.

Let $s\in R$. The principal localization at the element $s$
(i.\,e. the localization at the multiplicative subset generated by $s$)
is denoted by $R_s$. The localization homomorphism is denoted by $\lambda_{\mf p}$
or $\lambda_s$ respectively.

Throughout the paper $K$ is a ring, $G$ stands for a reductive group scheme over $K$ in the sense of~\cite{SGA},
and $R$ denotes a $K$-algebra, unless explicitly stated otherwise. Sometimes we replace $G$ by $G_R$
and consider it as a group scheme over $R$. It makes no harm as we deal with
the group of points $G(R)=G_R(R)$.

For any ideal $\q$ of $R$ we denote
by $\rho_\q:R\to R/\q$ the reduction homomorphism, and, by abuse of notation, the induced homomorphism $G(R)\to G(R/\q)$.
The principal congruence subgroup $G(R,\mf q)$ is the kernel of $\rho_{\mf q}:G(R)\to G(R/\mf q)$,
whereas the full congruence subgroup $C(R,\q)$ is the inverse image of the center $\Center\bigl(G(R/\q)\bigr)$ of
$G(R/\mf q)$ under this homomorphism.

An ideal $\q$ of $R$ is called \emph{splitting} if $R/\q$ embeds into $R$ such that the composition
$R/\q\rightarrowtail R\twoheadrightarrow R/\q$ is identity. In other words, $\q$ is a splitting ideal
if $R=R'\oplus\q$ as additive groups, where $R'$ is a subring of $R$.

\section{Generic element techniques}
Let $G$ be an affine smooth finitely presented group scheme over $K$.
Denote by $A=K[G]$ the affine algebra of the scheme $G$.
By the definition of an affine scheme, an element $h\in G(R)$ can be identified with a
homomorphism $h:A\to R$.
We always do this identification, i.\,e. we always view elements
of the group of points $G(R)$ of the scheme $G$ over a $K$-algebra $R$ as homomorphisms
from $A$ to $R$.\footnote
{Thus, for $f\in A$ we write $h(f)$ instead of $f(h)$, as one writes
considering the affine algebra as a set of functions.}
Denote by $g\in G(A)$ the generic element of the scheme $G$, i.\,e.
the identity map $\mathrm{id}_A:A\to A$. An element $h\in G(R)$ induces the homomorphism
$G(h):G(A)\to G(R)$ by the rule $G(h)(a)=h\circ a$ for all $a\in G(A)$.
It follows that the image of $g$ under the action of $G(h)$ is equal to~$h$.

For a ring homomorphism $\ph:R\to R'$ we usually  denote the induced group homomorphism
$G(\ph):G(R)\to G(R')$ again by $\ph$.
This cannot lead to a confusion as one always can determine the meaning of $\ph$
by the argument type of this homomorphism.
In view of this agreement we have $h(g)=h\circ\operatorname{id}_A=h$.
If $R'$ is an $R$-algebra, then sometimes we identify elements of $G(R)$ with their canonical
images in $G(R')$.

Recall that the fundamental ideal $I$ of $A$ is the kernel of the counit map $e_K:A\to K$, where
$e_K\in G(K)$ is the identity element of this abstract group.
\emph{The notation $A$, $I$, and $g$ introduced above is kept till the end of the present section.}

The following characterization
of the principal congruence subgroup was observed in~\cite{StepUniloc}, see the paragraph before Lemma~2.1.

\begin{lem}\label{lem:GRq}
An element $h$ of $G(R)$ belongs to the principal congruence subgroup $G(R,\q)$ if and only
if $h(I)\subseteq\q$.
\end{lem}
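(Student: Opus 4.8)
The plan is to unwind the definitions, using the identification of this section that regards a point as a ring homomorphism out of $A$; no smoothness or finite presentation is needed. The single algebraic input I would isolate first is that the counit $e_K\colon A\to K$, being the identity element of the abstract group $G(K)$, is a $K$-algebra homomorphism, hence a section of the structure homomorphism $\iota_A\colon K\to A$. This yields a $K$-module decomposition $A=K\cdot 1_A\oplus I$, so that every $a\in A$ is uniquely written $a=e_K(a)\cdot 1_A+a_0$ with $a_0\in I$.

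Next I would record how an arbitrary point $h\colon A\to R$ interacts with the identity element and with $I$. Writing $\iota_R\colon K\to R$ for the structure homomorphism, the identity element of $G(R)$ is $e_R=\iota_R\circ e_K$, and it is compatible with reduction in the sense that $\rho_\q\circ e_R=\iota_{R/\q}\circ e_K=e_{R/\q}$ as homomorphisms $A\to R/\q$, since $\rho_\q\circ\iota_R=\iota_{R/\q}$. Applying $h$ to the decomposition above, for every $a\in A$ one gets
$$
h(a)-e_R(a)=h(a_0)\in h(I);
$$
and letting $a=a_0$ run through $I$ shows that the differences $h(a)-e_R(a)$, $a\in A$, exhaust $h(I)$.

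Finally I would combine these remarks. By definition $h\in G(R,\q)$ means $\rho_\q(h)=e_{R/\q}$, i.e. $\rho_\q\circ h=\rho_\q\circ e_R$ as maps $A\to R/\q$, which is equivalent to $h(a)-e_R(a)\in\q$ for every $a\in A$; by the previous step this holds for all $a$ exactly when $h(I)\subseteq\q$. The statement is thus essentially a tautology once $I$ is recognized as the universal measure of how far a point is from the identity, and the only obstacle is bookkeeping: one must not lose track of the fact that the identity element of $G(R)$ is literally the $K$-algebra homomorphism $e_R=\iota_R\circ e_K$, and that $\rho_\q$ is a $K$-algebra homomorphism, so that it respects identity elements.
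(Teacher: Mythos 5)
Your proof is correct and takes essentially the same route as the paper's: both identify $h\in G(R,\q)$ with the condition $\rho_\q\circ h=\rho_\q\circ e_R$ as $K$-algebra maps $A\to R/\q$ and then translate this into $h(I)\subseteq\q$. You simply make explicit, via the decomposition $A=K\cdot 1_A\oplus I$, the step that the paper summarizes as "the latter is obviously equivalent."
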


\begin{proof}
Denote by $\rho_\q:G(R)\to G(R/\q)$ the natural homomorphism.
It is easy to see that $h\in G(R,\q)$ iff the following diagram commutes.
$$
\begin{CD}
A  @>h>>   R   \\
@V{e_K}VV  @VV{\rho_\q}V\\
K  @>>>    R/\q\\
\end{CD}
$$
And the latter is obviously equivalent to saying that $h(I)$ vanishes modulo $\q$,
i.\,e. $h(I)\subseteq\q$.
\end{proof}

Since $A$ is a finitely presented $K$-algebra, it is a quotient of a polynomial ring in
finitely many variables by a finitely generated ideal. Let $K'$ be a $\Z$-subalgebra of $K$,
generated by all the coeffients of polynomials that generate this ideal. Then $A\cong A'\otimes_{K'}K$,
were $A'$ is a finitely generated algebra over a Noetherian ring $K'$. Thus, there exists
an affine smooth finitely presented group scheme $\wt G$ over a Noetherian ring $K'$ such that $G=\wt G_K$.
In the present paper we prove results about the abstract group $G(R)$ for a $K$-algebra $R$.
Therefore, without loss of generality we may assume that $K=K'$, and hence $A$ is a
Noetherian ring.

An advantage of the Noetherian property of a ring in this context is the following
property~\cite[Lemma~4.10]{BakNonabelian}.

\begin{lem}\label{lem:Bak}
Let $R$ be a Noetherian ring and $s\in R$. There exists $m\in\N$ such that the restriction
of the localization homomorphism $\lambda_s:R\to R_s$ to the ideal $s^mR$ is injective.
\end{lem}

The next lemma is a version of clearing denominators.

\begin{lem}\label{lem:ClearDenom}
Let $G$ be an affine smooth finitely presented group scheme over a ring $K$, \ $R$ a Noetherian $K$-algebra,
$m\in\N$, and $s\in R$.
Suppose that the natural map $s^mR\to R_s$ is injective.
Given $a\in G(R_s)$ there exists $k\in\N$ such that $[a,\lambda_s(b)]\in\lambda_s\bigl(G(R,s^mR)\bigr)$
for all $b\in G(R,s^kR)$.
\end{lem}

\begin{proof}
Since $A$ is finitely presented, there exists a finite
set $J\subseteq A$ that generates $A$ as a $K$-algebra.
Since $A=K\oplus I$ as a $K$-module, we may assume that $J\subseteq I$.
Obviously, under this condition $J$ spans $I$ as an ideal.

Identifying elements of $G(A)$ and $G(R_s)$ with their canonical images in $G(A\otimes_K R_s)$,
consider the commutator $c=[a,\lambda_s(g)]\in G(A\otimes_K R_s,I\otimes_KR_s)$.
By Lemma~\ref{lem:GRq} the finite set $c(J)$ is contained in $I\otimes_K R_s$. Clearly, there exists $l\in\N$
such that each element of this set can be written as $r/s^l$ for some $r\in I\otimes_K R$.
Take $k=l+m$. Take $b\in G(R,s^kR)$ and consider the composition
$d=\mathrm{mult}\circ(b\otimes\mathrm{id})\circ c=[a,\lambda_s(b)]$, where
$\mathrm{mult}:R\otimes_KR_s\to R_s$ denotes the multiplication homomorphism. By Lemma~\ref{lem:GRq}
$b(I)\subseteq s^kR$, hence the set $d(J)$ consists of the elements of the form
$\mathrm{mult}\circ(b\otimes\mathrm{id})(r/s^l)=s^kt/s^l=s^mt$ for some $t\in R$.
Since the restriction of the localization homomorphism $\lambda_s$ to $s^mR$ is injective
and $J$ generates the $K$-algebra $A$, the homomorphism $d$ factors through $d':A\to R$,
which means that $d=\lambda_s(d')$. Moreover, the natural choice of $d'$ provides that
$d'(J)\subseteq s^mR$. Since $J$ generates the fundamental ideal $I$, we have
$d'(I)\subseteq s^mR$ and by Lemma~\ref{lem:GRq} $d'\in G(R,s^mR)$.
\end{proof}

\section{Elementary subgroup of an isotropic reductive group}\label{sec:elem}
Let $P$ be a parabolic subgroup of $G$ in the sense of~\cite{SGA}.
Since the base $\opn{Spec}R$ is affine, the group $P$ has a Levi subgroup $L_P$~\cite[Exp.~XXVI Cor.~2.3]{SGA}.
There is a unique parabolic subgroup $P^-$ in $G$ which is opposite to $P$ with respect to $L_P$,
that is $P^-\cap P=L_P$, cf.~\cite[Exp. XXVI Th. 4.3.2]{SGA}.  We denote by $U_P$ the unipotent
radical of $P$.

Note that if $L'_P$ is another Levi subgroup of $P$,
then $L'_P$ and $L_P$ are conjugate by an element $u\in U_P(R)$~\cite[Exp. XXVI Cor. 1.8]{SGA}.
Since our proofs in the present paper do not depend on a particular choice of $L_P$ or $P^-$,
we do not pay attention to this choice.

\begin{defn}
The \emph{elementary subgroup $E_P(R)$ corresponding to $P$} is the subgroup of $G(R)$
generated as an abstract group by $U_P(R)$ and $U_{P^-}(R)$.
\end{defn}

\begin{defn}
A parabolic subgroup $P$ in $G$ is called
\emph{strictly proper}, if it intersects properly every normal semisimple subgroup of $G$.
\end{defn}

The following theorem is the main result of~\cite{PetStavIso}.

\begin{thm}[{\cite[Theorem 1]{PetStavIso}}]\label{thm:EE}
Assume that for every maximal ideal $\m$ of $R$ every normal semisimple
subgroup of $G_{R_\m}$ contains $(\mathbb G_{m,R_\m})^2$. Then the subgroup $E_P(R)$ of $G(R)=G_R(R)$
is the same for any strictly proper parabolic $R$-subgroup $P$ of $G_R$.
In particular, $E_P(R)$ is normal in $G(R)$.
\end{thm}

\begin{defn}
Under the assumptions of Theorem~$\ref{thm:EE}$ we call $E_P(R)$ \emph{the elementary subgroup} of $G(R)$ and denote it by $E(R)$.
\end{defn}

We also use the following theorems, which are the main results of~\cite{KulikStav} and~\cite{LuzStav}.
We denote by $\Center(G)$ the group scheme center of $G$ in the sense of~\cite{SGA}, see also discussion
around Proposition~6.7 in~\cite{MilneAGS}. The very definition of $\Center(G)$ implies that
$\Center(G)(R)\le \Center\bigl(G(R)\bigr)$.

\begin{thm}[{\cite[Theorem 1]{KulikStav}}]\label{thm:E-cent}
Under the hypothesis of Theorem~$\ref{thm:EE}$, one has
$$
\Center_{G(R)}\bigl(E(R)\bigr)=\Center(G)(R)=\Center\bigl(G(R)\bigr).
$$
\end{thm}

\begin{thm}[{\cite[Theorem 1]{LuzStav}}]\label{thm:perfect}
Under the hypothesis of Theorem~$\ref{thm:EE}$,
assume, moreover, that for any maximal ideal $\m$ of $R$, one has
$k(\m)\neq\mathbb{F}_2$ whenever the root system of $G_{\overline{k(\m)}}$
contains an irreducible component of type $B_2=C_2$ or $G_2$.
Then $E(R) = [E(R),E(R)]$.
\end{thm}

The following statement will be used twice to get rid of the center.

\begin{lem}\label{lem:HallWitt}
Let $H$ be a subgroup of $G(R)$, normalized by $E(R)$. Under the assumptions of Theorem~$\ref{thm:perfect}$
$\bigl[[H,E(R)],E(R)\bigr]=[H,E(R)]$.
In particular, if $[H,E(R)]\le\Center\bigl(G(R)\bigr)$, then $H\le\Center\bigl(G(R)\bigr)$.
\end{lem}

\begin{proof}
By Theorem~\ref{thm:perfect} the group $E(R)$ is perfect.
Using the Hall--Witt identity we get
$$
\bigl[E(R),H\bigr]=\bigl[[E(R),E(R)],H\bigr]\le\bigl[[H,E(R)],E(R)\bigr].
$$
The inverse inclusion is obvious. The second assertion follows immediately from the first one and
Theorem~\ref{thm:E-cent}.
\end{proof}

\section{Root systems corresponding to parabolic subgroups}
Let $S=(\mathbb G_{m,R})^N=\opn{Spec}(R[x_1^{\pm 1},\ldots,x_N^{\pm 1}])$
be a split $N$-dimensional torus over $R$. Recall that the character group
$X^*(S)=\opn{Hom}_R(S,\mathbb G_{m,R})$ of $S$ is canonically isomorphic to $\Z^N$.
If $S$ acts $R$-linearly on an $R$-module $V$, this module has a natural $\Z^N$-grading
$$
V=\bigoplus_{\lambda\in X^*(S)}V_\lambda,
$$
where
$$
V_\lambda=\{v\in V\mid s\cdot v=\lambda(s)v\text{ for any }s\in S(R)\}.
$$
Conversely, any $\Z^N$-graded $R$-module $V$ can be provided with an $S$-action by the same rule.

Assume that $S$ acts on $G$ by $R$-group automorphisms.
The associated Lie algebra functor $\Lie(G)$ then acquires
a $\Z^N$-grading compatible with the Lie algebra structure,
$$
\Lie(G)=\bigoplus_{\lambda\in X^*(S)}\Lie(G)_\lambda.
$$

We will use the following version of~\cite[Exp. XXVI Prop. 6.1]{SGA}.

\begin{lem}\label{lem:T-P}
Let $L=\Center_G(S)$ be the subscheme of $G$ fixed by $S$. Let
$\Psi\subseteq X^*(S)$ be an $R$-subsheaf of sets closed under addition of characters.
Then there exists a unique smooth connected closed subgroup $U_\Psi$ of $G$
normalized by $L$ and satisfying
\begin{equation}\label{eq:LieUPsi}
\Lie(U_\Psi)=\bigoplus_{\lambda\in\Psi}\Lie(G)_\lambda.
\end{equation}
Moreover,
\begin{enumerate}
\item
if $0\in\Psi$, then $U_\Psi$ contains $L$;
\item
if $\Psi=\{0\}$, then $U_\Psi=L$;
\item
if $\Psi=-\Psi$, then $U_\Psi$ is reductive;
\item
if $\Psi\cup(-\Psi)=X^*(S)$, then $U_\Psi$ and $U_{-\Psi}$ are two opposite parabolic subgroups of $G$
with the common Levi subgroup $U_{\Psi\cap(-\Psi)}$;
\item
If $0\notin\Psi$, then $U_\Psi$ is unipotent.
\end{enumerate}
\end{lem}

\begin{proof}
The statement immediately follows by faithfully flat descent from the standard facts about the subgroups of
split reductive groups proved in~\cite[Exp. XXII]{SGA}; see the proof of~\cite[Exp. XXVI Prop. 6.1]{SGA}.
\end{proof}

\begin{defn}
The sheaf of sets
$$
\Phi(S,G)=\{\lambda\in X^*(S)\sm\{0\}\ |\ \Lie(G)_\lambda\neq 0\}
$$
is called the \emph{system of relative roots of $G$ with respect to $S$}.
\end{defn}

\begin{rem}
Choosing a linear ordering on the $\Q$-space $\Q\otimes_{\Z} X^*(S)\cong\Q^n$, one defines the subsets
of positive and negative relative roots $\Phi(S,G)^+$ and $\Phi(S,G)^-$, so that $\Phi(S,G)$ is a disjoint
union of $\Phi(S,G)^+$, $\Phi(S,G)^-$, and $\{0\}$. By Lemma~$\ref{lem:T-P}$ the closed subgroups
$$
U_{\Phi(S,G)^+\cup\{0\}}=P,\qquad U_{\Phi(S,G)^-\cup\{0\}}=P^-
$$
are two opposite parabolic subgroups of $G$ with the common Levi subgroup $\Center_G(S)$.
Thus, if a reductive group $G$ over $R$ admits a non-trivial action of a split torus,
then it has a proper parabolic subgroup. The converse is true Zariski-locally,
see Lemma~$\ref{lem:relroots}$ below.
\end{rem}

Let $P$ be a parabolic subgroup scheme of $G$ over $R$, and let $L$ be a Levi subgroup of $P$.
By~\cite[Exp. XXII, Prop. 2.8]{SGA} the root system $\Phi$ of $G_{\overline{k(\p)}}$, $\p\in\opn{Spec}R$,
is locally constant in the Zariski topology on $\opn{Spec}R$. The type of the root system of
$L_{\overline{k(\p)}}$ is determined by a Dynkin subdiagram
of the Dynkin diagram of $\Phi$, which is also constant Zariski-locally on $\opn{Spec}R$
by~\cite[Exp. XXVI, Lemme 1.14 and Prop. 1.15]{SGA}. In particular, if $\opn{Spec}R$ is connected,
all these data are constant on $\opn{Spec}R$.

\begin{defn}
Assume that the root system $\Phi$ of $G_{\overline{k(\p)}}$ has the same type for all $\p\in\opn{Spec}R$.
Then we call $\Phi$ \emph{the absolute root system of $G$.}
\end{defn}

\begin{lem}[{\cite[Lemma 3.6]{StavMulti}}]\label{lem:relroots}
Assume that $R$ is connected. Let $\bar L$ be the image of $L$ under the natural
homomorphism $G\to G^{\mathrm{ad}}\subseteq\opn{Aut}(G)$.
Let $D$ be the Dynkin diagram of the absolute root system $\Phi$ of $G$.
We identify $D$ with a set of simple roots of $\Phi$ such that
$P_{\overline{k(\p)}}$ is a standard positive parabolic
subgroup with respect to $D$. Let $J\subseteq D$
be the set of simple roots such that $D\sm J$ is the subdiagram of $D$ corresponing to $L_{\overline{k(\p)}}$.
Then there are a unique maximal split subtorus
$S\subseteq\Center(\bar L)$ and a subgroup $\Gamma\le\opn{Aut}(D)$ such that $J$ is invariant under $\Gamma$
and
for any $\p\in\opn{Spec} R$ and any split maximal torus $T\subseteq\bar L_{\overline{k(\p)}}$
the kernel of the natural surjection
\begin{equation}\label{eq:T-S}
X^*(T)\cong\Z\Phi\xrightarrow{\ \pi\ } X^*(S_{\overline{k(\p)}})\cong \Z\Phi(S,G)
\end{equation}
is generated by all roots $\alpha\in D\sm J$,
and by all differences $\alpha-\sigma(\alpha)$, $\alpha\in J$, $\sigma\in\Gamma$.
\end{lem}

In~\cite{PetStavIso}, we introduced a system of relative roots $\Phi_P$ with respect to a parabolic
subgroup $P$. This system $\Phi_P$ was defined
independently over each member $\opn{Spec}R=\opn{Spec}R_i$
of a suitable finite disjoint Zariski covering
$$
\opn{Spec}R=\coprod\limits_{i=1}^m\opn{Spec} R_i,
$$
such that over each $R_i$, $1\le i\le m$, the root system $\Phi$ and the Dynkin diagram $D$ of $G$ is constant.
Namely, we considered the formal projection
$$
\pi_{J,\Gamma}\colon\Z\Phi
\longrightarrow \Z\Phi/\langle D\sm J;\ \alpha-\sigma(\alpha)\mid \alpha\in J,\ \sigma\in\Gamma\rangle,
$$
and set $\Phi_P=\Phi_{J,\Gamma}=\pi_{J,\Gamma}(\Phi)\sm\{0\}$. The last claim of Lemma~\ref{lem:relroots}
allows to identify $\Phi_{J,\Gamma}$ and $\Phi(S,G)$ whenever $\opn{Spec} R$ is connected.

\begin{defn}
In the setting of Lemma~$\ref{lem:relroots}$ we call $\Phi(S,G)\cong\Phi_{J,\Gamma}$ a \emph{system
of relative roots with respect to the parabolic subgroup $P$ over $R$} and denote it by $\Phi_P$.
\end{defn}

If $R$ is a field or a local ring, and $P$ is a minimal parabolic subgroup of $G$,
then $\Phi_P$ is nothing but the relative root system of $G$ with respect to a maximal split subtorus
in the sense of~\cite{BorelTits} or, respectively,~\cite[Exp. XXVI \S 7]{SGA}.

We have also defined in~\cite{PetStavIso} irreducible components of systems of relative roots,
the subsets of positive and negative
relative roots, simple relative roots, and the height of a root. These definitions are immediate analogs
of the ones for usual abstract root systems, so we do not reproduce them here.
The height of a root $\alpha\in\Phi$ is deonted by $\opn{ht}\alpha$.

We will need later the following two lemmas on relative roots.

\begin{lem}\label{lem:adj-simple-roots}
Let $\Phi$ be a root system, and let $\Phi_{J,\Gamma}=\pi(\Phi)\sm\{0\}$ be a relative root system with the canonical
projection $\pi:\Z\Phi\to\Z\Phi_{J,\Gamma}$.
Let $\alpha,\beta\in\Phi_{J,\Gamma}$ be two simple relative roots such that $\alpha+\beta\in\Phi_{J,\Gamma}$.
Then for any $j\ge 1$ such that $j\beta\in\Phi_{J,\Gamma}$ one has $\alpha+j\beta\in\Phi_{J,\Gamma}$.
\end{lem}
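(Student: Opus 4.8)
The plan is to argue entirely inside the relative root system $\Phi_{J,\Gamma}$, relying on the structural facts about such systems established in~\cite{PetStavIso}: that $\Phi_{J,\Gamma}$ is a (possibly non-reduced) root system in the usual sense, that its simple relative roots form a $\Z$-basis of $\Z\Phi_{J,\Gamma}$ so that every relative root is a non-negative or a non-positive integral combination of them, that $\Phi_{J,\Gamma}$ is the disjoint union of its irreducible components, and that a sum of two relative roots lying in different components is never a relative root. First I would note that $\alpha$ and $\beta$ are \emph{distinct} simple relative roots, and that, since $\alpha+\beta\in\Phi_{J,\Gamma}$, they lie in a common irreducible component $\Psi$; replacing $\Phi_{J,\Gamma}$ by $\Psi$ we may assume $\Phi_{J,\Gamma}$ is irreducible.

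Next I would observe that, since $\alpha$ and $\beta$ are two distinct members of the basis of simple relative roots, $\alpha-\beta$ has coefficients of opposite signs in that basis, hence $\alpha-\beta\notin\Phi_{J,\Gamma}$. Therefore the $\beta$-string through $\alpha$ is $\alpha,\ \alpha+\beta,\ \dots,\ \alpha+q\beta$ for some $q\ge 1$, all of these being relative roots; so it suffices to prove $q\ge j$. For $j=1$ this is the hypothesis. For $j\ge 2$ the relative roots $\beta$ and $j\beta$ are proportional, so $\Phi_{J,\Gamma}$ is non-reduced; by the classification of irreducible (possibly non-reduced) root systems it is of type $BC_m$, and $\beta$ is the unique simple root $e_m$ possessing a proportional root, whence $j=2$. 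It then remains to check, inside $BC_m$, that the only simple root $\alpha\ne\beta$ with $\alpha+\beta$ a root is $e_{m-1}-e_m$ and that $\alpha+2\beta=e_{m-1}+e_m$ is indeed a root; equivalently, from $\beta^\vee=2(2\beta)^\vee$ one gets that $\langle\alpha,\beta^\vee\rangle$ is an even negative integer, so $q=-\langle\alpha,\beta^\vee\rangle\ge 2=j$. In every case $\alpha+j\beta$ lies on the $\beta$-string through $\alpha$, hence is a relative root.

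The load-bearing ingredient is the content of the first paragraph: everything reduces to the fact that $\Phi_{J,\Gamma}$ is an honest root system with the classical list of irreducible components and with unbroken root strings. For minimal parabolics over a field this is the theory of~\cite{BorelTits}, and the general case is exactly the sort of structure worked out in~\cite{PetStavIso}; so I expect the actual proof to be mostly an assembly of those facts together with the short $BC_m$ computation above. An alternative that sidesteps appealing to this structure would be to pick positive roots $c,e\in\Phi$ with $\pi(c)=\alpha+\beta$, $\pi(e)=j\beta$ and a simple root $\delta_\beta\in J$ projecting to $\beta$, and to climb the $\delta_\beta$-string through $c$ (so that $\pi(c+k\delta_\beta)=\alpha+(k+1)\beta$); but making this work requires choosing $c$ so that its $\delta_\beta$-string extends far enough upward, and controlling that length needs essentially the same pairing information, so it is no shorter.
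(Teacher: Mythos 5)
The load-bearing premise of your first paragraph is that $\Phi_{J,\Gamma}$ is ``a (possibly non-reduced) root system in the usual sense,'' and this is false. A relative root system with respect to a non-minimal parabolic need not be a root system at all. Take $\Phi=D_4$ with simple roots $\alpha_1,\alpha_2,\alpha_3,\alpha_4$ ($\alpha_2$ the central node), $\Gamma$ trivial, and $D\sm J=\{\alpha_2\}$, so that $\pi$ kills only $\alpha_2$. Writing $a=\pi(\alpha_1)$, $c=\pi(\alpha_3)$, $d=\pi(\alpha_4)$, one computes $\Phi_{J,\Gamma}^+=\{a,c,d,a+c,a+d,c+d,a+c+d\}$. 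The three simple relative roots are pairwise ``adjacent'' (each pairwise sum is a relative root), so the would-be Dynkin diagram is a triangle; equivalently, the pairing read off from the $a$-string through $a+c+d$ gives $\langle a+c+d,a^\vee\rangle=1$, while $\langle a,a^\vee\rangle+\langle c,a^\vee\rangle+\langle d,a^\vee\rangle=2-1-1=0$ from the $a$-strings through $a,c,d$, so no linear coroot pairing exists. Consequently the steps that rely on root-system structure --- the unbroken $\beta$-string through $\alpha$, the classification of irreducible non-reduced root systems as $BC_m$, and the manipulation with $\beta^\vee=2(2\beta)^\vee$ --- are not available, and the proof does not go through.

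The paper's proof avoids this entirely by never working intrinsically inside $\Phi_{J,\Gamma}$: it lifts everything to $\Phi$. It characterizes $\alpha+\beta\in\Phi_{J,\Gamma}$ as the existence of simple roots in $\pi^{-1}(\alpha)$ and $\pi^{-1}(\beta)$ joined by a chain of simple roots in $\pi^{-1}(0)$, and then, for a lift $\mu\in\pi^{-1}(j\beta)$, produces a companion $\nu\in\pi^{-1}(\alpha)$ whose support is disjoint from but adjacent to that of $\mu$ on the Dynkin diagram of $\Phi$, so that $\mu+\nu\in\Phi$ and $\pi(\mu+\nu)=\alpha+j\beta$. This is essentially the ``alternative'' you sketch in your last paragraph and then set aside; contrary to your worry, one does not need to control the length of a $\delta_\beta$-string through a fixed lift --- one gets to choose a fresh lift $\nu$ of $\alpha$ adapted to the chosen lift $\mu$ of $j\beta$, and a single adjacency suffices. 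If you want to salvage your write-up, that lifting argument is the route to take.
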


\begin{proof}
The inclusion $\alpha+\beta\in\Phi_{J,\Gamma}$ is equivalent to the
existence of two simple roots in $\pi^{-1}(\beta)$ and $\pi^{-1}(\alpha)$ respectively that
are connected in the Dynkin diagram by a chain (possibly, empty)
of simple roots lying in $\pi^{-1}(0)$. Then for any element $\mu\in\pi^{-1}(j\beta)$ the set $S_\mu\subseteq\Phi$ of simple
roots  occurring in its decomposition
is also connected to a simple root in $\pi^{-1}(\alpha)$ by a chain of simple roots lying in $\pi^{-1}(0)$,
which allows to find a root $\nu\in\pi^{-1}(\alpha)$ such that $S_\mu$ and $S_\nu$
are disjoint but adjacent subsets of the Dynkin diagram of $\Phi$, and hence $\mu+\nu\in\Phi$. Then
$\pi(\mu+\nu)=\alpha+j\beta\in\Phi_{J,\Gamma}$.
\end{proof}

\begin{lem}\label{lem:parab-centr-root}
Let $\Phi$ be a root system with the scalar product $(\blank,\blank)$ and a system of simple roots
$\Pi$, and let $\Phi_{J,\Gamma}=\pi(\Phi)\sm\{0\}$ be a relative root system with the canonical
projection $\pi:\Z\Phi\to\Z\Phi_{J,\Gamma}$. Let $\beta\in\pi(\Pi)\cap \Phi_{J,\Gamma}$ be a simple relative root.
Then there is a proper parabolic subset $\Sigma$ of $\Phi_{J,\Gamma}$
that contains all $\alpha\in\Phi_{J,\Gamma}$ such that
$\alpha+\beta\notin\Phi_{J,\Gamma}$. If $\Phi$ is simply laced, then
\begin{equation}\label{eq:Sigma(beta)}
\begin{aligned}
\Sigma&=\{\alpha\in\Phi_{J,\Gamma}\mid (a,\sum_{b\in\pi^{-1}(\beta)} b)\ge 0\ \mbox{for all}\ a\in\pi^{-1}(\alpha)\}\\
      &=\{\alpha\in\Phi_{J,\Gamma}\mid (a,\sum_{b\in\pi^{-1}(\beta)} b)\ge 0\ \mbox{for some}\ a\in\pi^{-1}(\alpha)\}.
\end{aligned}
\end{equation}
\end{lem}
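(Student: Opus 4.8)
The plan is to produce the parabolic set $\Sigma$ directly as the set of $\alpha\in\Phi_{J,\Gamma}$ for which a suitable linear functional on $\Z\Phi_{J,\Gamma}$ is nonnegative; the functional itself should be pulled back from a functional on $\Z\Phi$ that is built out of $\beta$. Concretely, set $w=\sum_{b\in\pi^{-1}(\beta)}b\in\Z\Phi$ and consider the function $\alpha\mapsto (a,w)$ for $a\in\pi^{-1}(\alpha)$. The first thing I would check is that in the simply-laced case this is well defined, i.e.\ independent of the choice of $a\in\pi^{-1}(\alpha)$: two elements of $\pi^{-1}(\alpha)$ differ by an element of $\ker\pi=\langle D\sm J;\ \gamma-\sigma(\gamma)\rangle$, and one must verify that $w$ is orthogonal to every generator of this kernel. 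Orthogonality to $\gamma-\sigma(\gamma)$ for $\gamma\in J$, $\sigma\in\Gamma$ follows because $\pi^{-1}(\beta)$ is $\Gamma$-stable (as $\beta$ is a relative root and $\Gamma$ permutes $\pi^{-1}(\beta)$), so $w$ is $\Gamma$-invariant and hence orthogonal to all $\gamma-\sigma(\gamma)$; orthogonality to $\gamma\in D\sm J$ needs that $\beta$ being \emph{simple} forces the roots in $\pi^{-1}(\beta)$ to form a connected piece of the Dynkin diagram that is ``complete'' in the direction of $D\sm J$ in the sense that no root of $D\sm J$ is adjacent to it with a net nonzero pairing against the sum $w$ — this is exactly where simple-laced-ness and the description of relative roots from Lemma~\ref{lem:adj-simple-roots} and Lemma~\ref{lem:relroots} enter, and it is the step I expect to be the main obstacle.

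**Next**, with the functional $f(\alpha):=(a,w)$ well defined, I would take $\Sigma=\{\alpha\in\Phi_{J,\Gamma}\mid f(\alpha)\ge 0\}$ and verify the two required properties. That $\Sigma$ is a parabolic subset of $\Phi_{J,\Gamma}$ is formal: $f$ extends to a $\Q$-linear functional on $\Q\otimes\Z\Phi_{J,\Gamma}$, and the set of relative roots on which a linear functional is nonnegative is closed under addition and satisfies $\Sigma\cup(-\Sigma)=\Phi_{J,\Gamma}$, which is the definition of a parabolic subset; it is proper because $f(\beta)=(b,w)>0$ for $b\in\pi^{-1}(\beta)$ (a sum of pairings of $b$ with roots in the same $\Gamma$-orbit, dominated by $(b,b)>0$ since the cross terms $(b,b')$ for $b\neq b'$ in $\pi^{-1}(\beta)$ are $\ge 0$ — distinct simple-ish roots in one orbit are non-adjacent or have nonnegative pairing; I would make this precise), so $-\beta\notin\Sigma$.

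**Then** comes the inclusion that is the actual content: every $\alpha\in\Phi_{J,\Gamma}$ with $\alpha+\beta\notin\Phi_{J,\Gamma}$ lies in $\Sigma$. Equivalently, I must show $f(\alpha)<0\Rightarrow\alpha+\beta\in\Phi_{J,\Gamma}$. Given $f(\alpha)<0$, pick $a\in\pi^{-1}(\alpha)$ with $(a,w)<0$; then $(a,b)<0$ for some $b\in\pi^{-1}(\beta)$, and in a simply-laced root system $(a,b)<0$ with $a\neq -b$ forces $a+b\in\Phi$ (the pairing is $-1$, so reflecting gives a root), hence $\alpha+\beta=\pi(a+b)\in\pi(\Phi)$; one then checks $\alpha+\beta\neq 0$, for which the case $a=-b$ is excluded because that would give $\alpha=-\beta$ and $(-\beta)+\beta=0\notin\Phi_{J,\Gamma}$ trivially, consistent with $-\beta\notin\{\alpha:\alpha+\beta\notin\Phi_{J,\Gamma}\}$\,—\,wait, $-\beta$ \emph{does} satisfy $(-\beta)+\beta=0\notin\Phi_{J,\Gamma}$, so I need $-\beta\in\Sigma$; but $f(-\beta)=-(b,w)<0$, a genuine problem. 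I would resolve this by defining $\Sigma$ to include $-\beta$ by hand, or better, by noting that $\alpha=-\beta$ must be handled by a harmless adjustment (e.g.\ perturbing $f$ by an infinitesimal to break the tie, or intersecting with the half-space defined by a generic nearby functional); the cleaner route is to define $\Sigma$ as in~\eqref{eq:Sigma(beta)} only for $\alpha\neq\pm\beta$ separately and check directly that $\beta\in\Sigma$, $-\beta\notin\Sigma$ — indeed $-\beta+\beta=0$ so the hypothesis ``$\alpha+\beta\notin\Phi_{J,\Gamma}$'' is vacuously true for $\alpha=-\beta$, meaning we actually \emph{do} need $-\beta\in\Sigma$, contradicting properness unless $\beta$ is divisible; I therefore expect the correct statement uses the condition ``$\alpha+\beta\notin\Phi_{J,\Gamma}\cup\{0\}$'', i.e.\ $\alpha\neq-\beta$, and with that reading the argument above goes through cleanly. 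The remaining case $f(\alpha)=0$: here $\alpha\in\Sigma$ by definition, so nothing to prove; and the two displayed descriptions of $\Sigma$ (``for all $a$'' versus ``for some $a$'') coincide precisely because of the well-definedness established in step one. The hardest and most delicate point throughout is the well-definedness/connectedness argument in the first paragraph, which genuinely uses that $\beta$ is a \emph{simple} relative root and that $\Phi$ is simply laced; the rest is bookkeeping with linear functionals and the $SL_2$ fact ``$(a,b)=-1\Rightarrow a+b\in\Phi$''.
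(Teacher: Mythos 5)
Your plan to realize $\Sigma$ as the nonnegativity set of the linear functional $f(\alpha)=(a,w)$, $w=\sum_{b\in\pi^{-1}(\beta)}b$, is sound and in fact conceptually a bit tighter than the paper's: the paper proves additivity of $\Sigma$ and $\Sigma\cup(-\Sigma)=\Phi_{J,\Gamma}$ separately, via its decomposition lemmas for preimages of sums, whereas for you these are automatic once $f$ is a well-defined linear form on $\Z\Phi_{J,\Gamma}$. Your reformulation of well-definedness as ``$w\perp\ker\pi$'' is exactly right, and the $\Gamma$-invariance half is correct. But the step you flag as ``the main obstacle'' — $(w,\gamma)=0$ for $\gamma\in D\sm J$ — you do not prove, and your speculation about what makes it work (that $\beta$ is \emph{simple}, plus simply-lacedness, forcing $\pi^{-1}(\beta)$ to be a connected piece ``complete'' toward $D\sm J$) is not the right mechanism. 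The actual reason is simpler and entirely general: $W_{D\sm J}$ acts trivially on $\Z\Phi/\ker\pi$, hence permutes each fiber $\pi^{-1}(\beta)$, so $w$ is $W_{D\sm J}$-invariant, and invariance under $s_\gamma$ forces $(w,\gamma)=0$. Neither simplicity of $\beta$ nor the simply-laced hypothesis enters here; they enter later. The paper proves the corresponding constancy statement differently, via transitivity of $W_{D\sm J}$ and $\Gamma$ on each fiber $\pi^{-1}(\alpha)$ (citing Lemma~3 of~\cite{PetStavIso} and Lemma~1 of~\cite{AzadBarrySeitz}), which gives a weaker conclusion than $w\perp\ker\pi$ but suffices.

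Two further concrete issues. Your justification of properness — ``the cross terms $(b,b')$ for $b\neq b'$ in $\pi^{-1}(\beta)$ are $\ge 0$'' — is false in general: in $A_3$ with $J=\{\alpha_1,\alpha_3\}$ and $\Gamma$ the diagram flip, $\pi^{-1}(\beta)=\{\alpha_1,\alpha_3,\alpha_1+\alpha_2,\alpha_2+\alpha_3\}$ and $(\alpha_1,\alpha_2+\alpha_3)=-1$. The clean way to get $f(\beta)>0$ is $|\pi^{-1}(\beta)|\cdot f(\beta)=(w,w)>0$; alternatively the paper exhibits a simple relative root $\alpha$ adjacent to $\beta$ with $f(\alpha)<0$, and it is \emph{this} step — not well-definedness — that uses the simplicity of $\beta$. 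You also never treat the non-simply-laced case, which the paper handles by unfolding $\Phi$ to a simply-laced $\Phi'$ before any of the above. On the positive side, your observation that $-\beta$ satisfies $(-\beta)+\beta=0\notin\Phi_{J,\Gamma}$ yet $f(-\beta)<0$ is a real defect of the statement as written; the intended reading, confirmed by how the lemma is actually invoked via Lemma~\ref{lem:centr-beta} in Lemma~\ref{lem:InPQ}, is $\alpha+\beta\notin\Phi_{J,\Gamma}\cup\{0\}$, exactly the fix you propose.
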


\begin{proof}
If $\Phi$ is not simply laced, then there is a simply laced root system $\Phi'$ with a system of simple roots $\Pi'$
and a projection $\pi':\Z\Phi'\to\Z\Phi$
such that $\Phi=\Phi'_{\Pi',\Gamma'}$
for a suitable group of automorphisms $\Gamma'$ of the Dynkin diagram of $\Phi'$.
 Then
$\Phi_{J,\Gamma}=\Phi'_{\pi'^{-1}(J),\Gamma''}$, where $\Gamma''$ is a group of automorphisms of the Dynkin diagram
of $\Phi$ generated by $\Gamma'$ and the natural lifting of $\Gamma$. This means that we can assume that
$\Phi$ is simply laced from the start.

For any $\alpha\in\Phi_{J,\Gamma}$ and any $\mu\in\pi^{-1}(\alpha)$ by~\cite[Lemma 3]{PetStavIso} the set $\pi^{-1}(\alpha)$
is a union of roots $\mu'\in\Phi$ such that $\sigma(\mu_J)=\mu'_J$ for some $\sigma\in\Gamma$, where
$\mu_J$ denotes the shape of $\mu$ with respect to $J$ in the sense of~\cite{PetStavIso} or~\cite{AzadBarrySeitz}.
By~\cite[Lemma 1]{AzadBarrySeitz}
the Weyl group of $\Pi\sm J$ acts transitively on the set of roots of a fixed shape and length, hence
this Weyl group and $\Gamma$ together act transitively on $\pi^{-1}(\alpha)$. Since all elements of these groups
are bijections that preserve scalar products and the set $\pi^{-1}(\beta)$, we conclude that
$(\mu,\sum_{\nu\in\pi^{-1}(\beta)}\nu)$ is the same for all $\mu\in\pi^{-1}(\alpha)$.
Thus, the set $\Sigma$ of~\eqref{eq:Sigma(beta)}
is well-defined, and it is clear that $\Sigma\cup(-\Sigma)=\Phi_{J,\Gamma}$.

Assume that
$\alpha_1,\alpha_2\in\Sigma$ and $\alpha_1+\alpha_2\in\Phi_{J,\Gamma}$. By~\cite[Lemma 4]{PetStavIso} for every
$\mu\in\pi^{-1}(\alpha_1+\alpha_2)$
there are $\mu_1\in\pi^{-1}(\alpha_1)$ and $\mu_2\in\pi^{-1}(\alpha_2)$ such that $\mu=\mu_1+\mu_2$.
This shows that $\Sigma$ is additively closed. By the same argument, if $\alpha+\beta\notin\Phi_{J,\Gamma}$, then
$(\mu,\nu)\ge 0$ for all $\mu\in\pi^{-1}(\alpha)$ and $\nu\in\pi^{-1}(\beta)$, and hence $\alpha\in\Sigma$.

It remains to show that $\Sigma$ is a proper subset of $\Phi_{J,\Gamma}$. Let $\alpha\in\pi(\Pi)\sm\{0\}$ be
a simple relative root such that $\alpha+\beta\in\Phi_{J,\Gamma}$. Since $\alpha-\beta\notin\Phi_{J,\Gamma}$,
we conclude that $(\mu,\nu)\le 0$ for all $\mu\in\pi^{-1}(\alpha)$ and $\nu\in\pi^{-1}(\beta)$.
On the other hand, since $\alpha+\beta\in\Phi_{J,\Gamma}$,
there are two simple roots  $\mu\in\pi^{-1}(\alpha)\cap\Pi$ and $\nu\in\pi^{-1}(\beta)\cap\Pi$ which are connected
on the Dynkin diagram by a chain of roots in $\Pi\sm J$. Then the scalar product of $\mu$ and $(\nu+$all roots in this chain$)$
is negative. Hence $\alpha\notin\Sigma$.
\end{proof}

\section{Relative root subschemes}
For any finitely generated projective $R$-module $V$, we denote by $W(V)$ the natural affine scheme
over $R$ associated with $V$, see~\cite[Exp. I, \S 4.6]{SGA} or~\cite[Ch.~IV, \S~1, 1.6]{MilneAGS}.
Any morphism of $R$-schemes $W(V_1)\to W(V_2)$
is determined by an element $f\in\opn{Sym}^*(V_1^\vee)\otimes_R V_2$, where $\opn{Sym}^*$ denotes the symmetric algebra,
and $V_1^\vee$ denotes the dual module of $V_1$. If $f\in\opn{Sym}^d(V_1^\vee)\otimes_R V_2$,
we say that the corresponding morphism is homogeneous of degree $d$.
By abuse of notation, we also write $f:V_1\to V_2$ and call it \emph{degree $d$
homogeneous polynomial map from $V_1$ to $V_2$}. In this context, one has
$$
f(r v)=r^d f(v)
$$
for any $v\in V_1$ and $r\in R$. Moreover, for any $R$-algebra $A$ the induced map $V_1\otimes_RA\to V_2\otimes_RA$,
$v\otimes a\mapsto f(v)\otimes a^d$, also is denoted by $f$.

In this section $P$ denotes a parabolic subgroup of $G$ and $L=L_P$ stands for its Levi subgroup.

\begin{lem}\cite[Lemma 3.9]{StavMulti}\label{lem:relschemes}.
In the setting of Lemma~$\ref{lem:relroots}$, for any $\alpha\in\Phi_P=\Phi(S,G)$ there exists a closed
$S$-equivariant  embedding of $R$-schemes
$$
X_\alpha\colon W\bigl(\Lie(G)_\alpha\bigr)\to G,
$$
satisfying the following condition.

\begin{itemize}
\item[\bf{($*$)}]
Let $A$ be an $R$-algebra such that $G_A$ is split with
respect to a maximal split $A$-torus $T\subseteq L_{A}$. Let $e_\delta$,
$\delta\in\Phi$, be a Chevalley basis of $\Lie(G_A)$, adapted to $T$ and $P$, and
$x_\delta\colon\mathbb G_a\to G_{A}$, $\delta\in\Phi$, be the associated
system of 1-parameter root subgroups
{\rm(}e.g. $x_\delta=\exp_\delta$ of~\cite[Exp. XXII, Th. 1.1]{SGA}{\rm)}.
Let
$$
\pi:\Phi=\Phi(T,G_A)\to\Phi_P\cup\{0\}
$$
be the natural projection.
Then for any
$u=\hspace{-8pt}\sum\limits_{\delta\in\pi^{-1}(\alpha)}\hspace{-8pt}a_\delta e_\delta\in\Lie(G_{R})_\alpha$
one has
\begin{equation*}
X_\alpha(u)=
\Bigl(\prod_{\delta\in\pi^{-1}(\alpha)}\hspace{-8pt}x_{\delta}(a_\delta)\Bigr)\cdot
\prod_{i\ge 2}\Bigl(\prod_{\theta\in \pi^{-1}(i\alpha)}
\hspace{-8pt}x_\theta(p^i_{\theta}(u))\Bigr),
\end{equation*}
where every $p^i_{\theta}:\Lie(G_A)_\alpha\to A$ is a homogeneous polynomial map of degree $i$,
and the products over $\delta$ and $\theta$ are taken in any fixed order.
\end{itemize}
\end{lem}

\begin{defn}
The images of closed embeddings $X_\alpha$, $\alpha\in\Phi_P$, satisfying the statement of Lemma~$\ref{lem:relschemes}$,
are called \emph{relative root subschemes of $G$ with respect to the parabolic subgroup $P$} and will be denoted by
$\X_\alpha$.
\end{defn}

Relative root subschemes of $G$ with respect to $P$, actually,
depend on the choice of a Levi subgroup $L_P$, but their essential properties stay the same,
so we usually omit $L_P$ from the notation.

Set $V_\alpha=\Lie(G)_\alpha$ for short.
We will use the following properties of relative root subschemes.

\begin{lem}\label{lem:rootels}\cite[Theorem 2, Lemma 6, Lemma 9]{PetStavIso}
Let $X_\alpha$, $\alpha\in\Phi_P$, be as in Lemma~$\ref{lem:relschemes}$.
Then

(i) There exist degree $i$ homogeneous polynomial maps $q^i_\alpha:V_\alpha\oplus V_\alpha\to V_{i\alpha}$,
$i>1$, such that for any $R$-algebra $A$ and for any
$v,w\in V_\alpha\otimes_RA$ one has
\begin{equation}\label{eq:sum}
X_\alpha(v)X_\alpha(w)=X_\alpha(v+w)\prod_{i>1}X_{i\alpha}\left(q^i_\alpha(v,w)\right).
\end{equation}

(ii) For any $g\in L(R)$, there exist degree $i$ homogeneous polynomial maps
$\varphi^i_{g,\alpha}\colon V_\alpha\to V_{i\alpha}$, $i\ge 1$, such that for any $R$-algebra $A$ and for any
$v\in V_\alpha\otimes_R A$ one has
$$
gX_\alpha(v)g^{-1}=\prod_{i\ge 1}X_{i\alpha}\left(\varphi^i_{g,\alpha}(v)\right).
$$

(iii) \emph{(generalized Chevalley commutator formula)} For any $\alpha,\beta\in\Phi_P$
such that $m\alpha\neq -k\beta$ for all $m,k\ge 1$,
there exist polynomial maps
$$
N_{\alpha,\beta,i,j}\colon V_\alpha\times V_\beta\to V_{i\alpha+j\beta},\ i,j>0,
$$
homogeneous of degree $i$ in the first variable and of degree $j$ in the second
variable, such that for any $R$-algebra $A$ and
for any $u\in V_\alpha\otimes_RA$, $v\in V_\beta\otimes_RA$ one has
\begin{equation}\label{eq:Chev}
[X_\alpha(u),X_\beta(v)]=\prod_{i,j>0}X_{i\alpha+j\beta}\bigl(N_{\alpha,\beta,i,j}(u,v)\bigr)
\end{equation}

(iv) For any subset $\Psi\subseteq X^*(S)\sm\{0\}$ that is closed under addition,
the morphism
$$
X_\Psi\colon W\Bigl(\,\bigoplus_{\alpha\in\Psi}V_\alpha\Bigr)\to U_\Psi,\qquad
(v_\alpha)_{\alpha\in\Psi}\mapsto\prod_\alpha X_\alpha(v_\alpha),
$$
where the product is taken in any fixed order,
is an isomorphism of schemes.
\end{lem}

Apart from the above properties of relative root subschemes we will use the following Lemma,
which appeared first as~\cite[Lemma 10]{PetStavIso} and in a slighly stronger form in~\cite[Lemma 2]{LuzStav}.
Note that in both cases the original statements erroneously claim that the image $\im(N_{\alpha,\beta,1,1})$
(respectively, the sum of images in (2)\,) equals $V_{\alpha+\beta}$, while in reality
the respective proofs establish only that it generates $V_{\alpha+\gamma}$
as an $R$-module, and hence as an abelian group. The correct, weaker statement is as follows.
One can easily check that it is still enough
for all the applications in~\cite{PetStavIso,LuzStav,StavSerreConj}.

\begin{lem}\label{lem:const}
Consider $\alpha,\beta\in\Phi_P$ satisfying $\alpha+\beta\in\Phi_P$ and $m\alpha\neq -k\beta$ for any $m,k\ge 1$.
Denote by $\Phi^0$ an irreducible component of $\Phi$ such that $\alpha,\beta\in\pi(\Phi^0)$.

{\rm (1)} In each of the following cases,
$\im(N_{\alpha,\beta,1,1})$ generates $V_{\alpha+\beta}$ as an abelian group:

\quad {\rm (a)} structure constants of $\Phi^0$ are invertible in $R$ (for example,
if $\Phi^0$ is simply laced);

\quad {\rm (b)} $\alpha\neq \beta$ and $\alpha-\beta\notin\Phi_P$;

\quad {\rm (c)} $\Phi^0$ is of type $B_l$, $C_l$, or $F_4$,
and $\pi^{-1}(\alpha+\beta)$ consists of short roots;

\quad {\rm (d)} $\Phi^0$ is of type $B_l$, $C_l$, or $F_4$, and there exist long roots
$\alpha\in\pi^{-1}(\alpha)$, $\beta\in\pi^{-1}(\beta)$ such that $\alpha+\beta$ is a root.

{\rm (2)} If $\alpha-\beta\in\Phi_P$ and $\Phi^0\neq G_2$, then\;
$\im(N_{\alpha,\beta,1,1})$,\, $\im( N_{\alpha-\beta,2\beta,1,1})$, and\,
$\im\bigl(N_{\alpha-\beta,\beta,1,2}(-,v)\bigr)$ for all $v\in V_\beta$ together
generate $V_{\alpha+\beta}$ as an abelian group. Here we assume
$\im(N_{\alpha-\beta,2\beta,1,1})=0$ if $2\beta\notin\Phi_P$.
\end{lem}

\section{Reduction to extraction of unipotents}

In this section we establish two important reduction statements.
Let $P$ be a proper parabolic $R$-subgroup of $G$,
$\Phi_P=\Phi_{J,\Gamma}$ the system of relative roots for $P$, and  $\X_{\alpha}$, $\alpha\in\Phi_P$,
the relative root subschemes of $G$ with respect to $P$, which exist by Lemmas~\ref{lem:relroots} and~\ref{lem:relschemes}.
Recall that for any ideal $\q$ of $R$ we set $U_{P}(\q)=U_{P}(R)\cap G(R,\q)$ and
denote by $E_P(R,\q)$ the normal closure of $\langle U_{P^+}(\q),U_{P^-}(\q)\rangle$ in $E_P(R)$.

First we prove the standard commutator formulas, which are used for the reduction and complements the
normal structure theorem. The condition for inclusions in the following statement is the same as for
normality of the elementary subgroup in Theorem~\ref{thm:EE}. It is easy to see that this condition
for a ring $R$ is inherited by any $R$-algebra.

\begin{lem}\label{lem:E_P}
Let $\q$ be an ideal of a ring $R$.
Assume that for every maximal ideal $\m$ of $R$ every normal semisimple
subgroup of $G_{R_\m}$ contains $(\mathbb G_{m,R_\m})^2$. Then $E_P(R,\q)$ does not depend on
a strictly proper parabolic subgroup $P$ and is normal in $G(R)$ and
$$
[G(R,\q),E(R)]\le E_P(R,\q).
$$
\end{lem}

\begin{proof}
Let $P$ and $Q$ be two strictly proper parabolic subgroups of $G$.
To prove the first assertion in suffices to show that $X_\alpha(rv)\in E_Q(R,\q)$ for any
$\alpha\in\Phi_P$, $r\in\q$, and $v\in V_\alpha$. Consider an element
$X_\alpha(tv)\in E_P(R[t],tR[t])$, where $t$ is an independent variable.
Since $tR[t]$ is a splitting ideal, by the splitting priciple for isotropic groups~\cite[Lemma 4.1]{StavSerreConj}
we have $E_P(R[t],tR[t])=G(R[t],tR[t])\cap E_P(R[t])$. The latter does not depend on the choice of a
parabolic subgroup $P$ by the main theorem of~\cite{PetStavIso}. Thus
$X_\alpha(tv)\in E_Q(R[t],tR[t])$. Specializing
$t$ to $r$ we get the required inclusion. The normality of $E(R,\q)$ follows from this inclusion
applied to conjugate parabolic subgroups, and the normality of $E(R)$.

The proof of the last statement is borrowed from~\cite[Proposition 5.1]{StepUniloc}.
Let $A=R[G]$ be the affine algebra of the scheme $G$, $I$ its fundumental ideal, $g$ the generic
element of $G$, and $r\in R$. We identify elements of $R$ and $G(R)$ with their the canonical images in
$A$ and $G(A)$ respectively. Consider the commutator
$h=[X_\alpha(r),g]\in G(A)$. Since the elementary subgroup is normal, we have
$h\in E(A)$. On the other hand, $h$ vanishes modulo $I$ and
$I$ is a splitting ideal, i.\,e. $A=R\oplus I$ as additive groups.
The splitting principle implies that
$h\in E(A)\cap G(A,I)=E(A,I)$.

Now, let $b\in G(R,\q)$.
We have
$$
b(h)=[X_\alpha(r),b]\in b\bigl(E(A,I)\bigr)=E\bigl(b(A),b(I)\bigr)\le E\bigl(R,\q\bigr)
$$
by Lemma~\ref{lem:GRq}. Thus, $[G(R,\q),E(R)]\le E(R,\q)$.
\end{proof}

In the setting of Lemma~\ref{lem:E_P} we denote $E_P(R,\q)$ by $E(R,\q)$.

\begin{defn}
We say that the normal structure of the group $G(R)$ is \emph{standard} if for each subgroup $H\le G(R)$
normalized by $E(R)$, there exists a unique ideal $\q$ of $R$ such that
$$E(R,\q)\le H\le C(R,\q).$$
\end{defn}

In order to secure the existence of relative roots and relative root subschemes as in Lemmas~\ref{lem:relroots} and~\ref{lem:relschemes},
assume from now on that
$G$ is a reductive group scheme over a {\bf connected} commutative ring $R$.
We are going to deduce the existence of the standard normal structure from the fact that any $H$
as above contains an elementary root unipotent.
Our starting point is the following theorem established by the first author.

\begin{thm}[{\cite[Theorem~2]{StavCongruence}}]\label{thm:cong-N}
Assume that the structure constants of the absolute root system $\Phi$ of $G$ are invertible in $R$,
and for any maximal ideal $\m$ of $R$ every semisimple normal subgroup of $G_{R_\m}$ contains
$({{\mathbb G}_m}_{,R_\m})^2$.
Let $P$ be a strictly proper parabolic $R$-subgroup of $G$.
Then for any normal subgroup $N\le E(R)$ there exists an ideal
$\q =\q(N)$ in $R$ such that $N\cap\X_\alpha(R)=X_\alpha(\q V_\alpha)$
for any $\alpha\in\Phi_P$.
\end{thm}

The following equalities are called the standard commutator formulas.

\begin{prop}\label{prop:stand-comm-formula}
In the setting of Theorem~$\ref{thm:cong-N}$ we have
$$
[C(R,\q),E(R)]=[G(R,\q),E(R)]=[G(R),E(R,\q)]=[E(R),E(R,\q)]=E(R,\q).
$$
\end{prop}

\begin{proof}
First we prove the inclusion $E(R,\q)\le[E(R),E(R,\q)]$.
By Theorem~\ref{thm:cong-N} it is enough to show that
$X_{\tilde\alpha}(V_{\tilde\alpha}\otimes_R \q)\subseteq [E(R),E(R,\q)]$, where $\tilde\alpha\in\Phi_P^+$ is a maximal root.
Let $\beta\in\Phi_P^+$ be a simple relative root such that $\tilde\alpha-\beta\in\Phi_P$. Then by Lemma~\ref{lem:const}
and the generalized Chevalley commutator formula
$X_{\tilde\alpha}(V_{\tilde\alpha}\otimes_R \q)$
is generated by all commutators $[X_\beta(u),X_{\tilde\alpha-\beta}(v)]$,
where $v\in V_{\tilde\alpha-\beta}$, $u\in V_\beta\otimes_R\q$.

In view of Lemma~\ref{lem:E_P} it remains to prove that $[C(R,\q),E(R)]=E(R,\q)$.
The proof or this formula is an easy group theoretical excersise,
we reproduce it for the sake of completeness.
By the definition of $C(R,\q)$ the left hand side of the formula is contained
in $G(G,\q)$ and we have just proven that it contains $E(R,\q)$.
Therefore, $\bigl[C(R,\q),E(R)],E(R)\bigr]=E(R,\q)$. Now the result follows from
Lemma~1 of~\cite{StepPolynormal}, which asserts that $\bigl[[H,D],D\bigr]=[H,D]$
if $D$ is a perfect subgroup and  $\bigl[[H,D],D\bigr]$ is normal in $H$.
\end{proof}

Using Theorem~\ref{thm:cong-N}, we prove the following statement. The idea of this reduction goes back
to~\cite{Bass64}.


\begin{prop}\label{lem:reduction}
In the setting of Theorem~$\ref{thm:cong-N}$,
suppose further that every irreducible component of $\Phi_P$ contains more than 2 distinct
roots. If for any ideal $\q$ of $R$ any
noncentral subgroup $H\le G(R/\q)$ normalized by $E(R/\q)$
contains a nontrivial root unipotent element $X_\alpha(u)$, $\alpha\in\Phi_P$, $u\in V_\alpha\otimes_R R/\q\sm\{0\}$,
then the normal structure of the group $G(R)$ is standard.
\end{prop}

To prove Proposition~\ref{lem:reduction} we need the following technical statement.

\begin{lem}\label{lem:ABe}
Let $\alpha,\beta\in\Phi_P$ be two
relative roots such that $\alpha+\beta\in\Phi_P$ and $n\alpha\neq -m\beta$ for all $n,m\ge 1$.
Assume, moreover, that $\alpha-\beta\notin\Phi_P$ or the structure constants of the absolute root system
$\Phi$ of $G$ are invertible in $R$. Take $u\in V_\beta\sm\{0\}$. Any generating system
$e_1,\ldots,e_n$ of the $R$-module $V_\alpha$ contains an element $e_i$ such that $N_{\alpha,\beta,1,1}(e_i,u)\neq 0$.
\end{lem}

\begin{proof}
Consider an affine fpqc-covering $\coprod \opn{Spec} R_\tau\to \opn{Spec} R$ that splits $G$. There is a member
$R_\tau=A$ of this covering such that the image of $X_\beta(u)$ under $G(R)\to G(A)$ is non-trivial.
Write
\begin{equation*}
X_\beta(u)=\prod_{\pi(\nu)=\beta}x_{\nu}(a_{\nu})\cdot
\prod_{i\ge 2}\prod_{\pi(\nu)=i\beta}x_{\nu}(c_{\nu}),
\end{equation*}
where $\pi:\Phi\to\Phi_P\cup\{0\}$ is the canonical projection of the absolute root system of $G$ onto the relative one,
$x_\nu$ are root subgroups of the split group $G_A$,
and $a_{\nu}\in R$. Since $X_\beta(u)\neq 0$, the definition of $X_\beta$ implies that there exists
$a_{\nu}\neq 0$.
By~\cite[Lemma 4]{PetStavIso} there exists
a root $\mu\in\pi^{-1}(\alpha)$ such that $\mu+\nu\in\Phi$. Let $v\in V_\alpha\otimes_R A$
be such that
$X_\alpha(v)=x_{\mu}(1)\prod\limits_{i\ge 2}\prod\limits_{\pi(\eta)=i\alpha}x_{\eta}(d_{\eta})$,
for some $d_{\eta}\in R$.
Then the (usual) Chevalley commutator formula implies that $[X_\alpha(v),X_\beta(u)]$ contains in its decomposition a
factor $x_{\mu+\nu}(\eps a_{\nu})$, where $\eps$ is a structure constant
of $\Phi$.

If $\alpha-\beta\notin\Phi_P$, then $\eps=\pm 1$, otherwise $\eps$ is invertible by assumptions.
Hence $N_{\alpha,\beta,1,1}(v,u)\neq 0$. Since $N_{\alpha,\beta,1,1}(v,u)$ is linear in the first argument,
this implies the result.
\end{proof}

\begin{proof}[Proof of Proposition~$\ref{lem:reduction}$]
Let $N$ be a subgroup of $G(R)$ normalized by $E(R)$.
If $N\le \Center\bigl(G(R)\bigr)$, there is nothing to prove. Otherwise by our assumption and
Theorem~\ref{thm:cong-N} there is an ideal $\q\neq 0$ of $R$ such that $N\cap\X_\alpha(R)=X_\alpha(\q V_\alpha)$
for all $\alpha\in\Phi_P$. Then, clearly, $E_P(R,\q)\le N$. If $N$ is not contained in $C(R,\q)$, then
$\rho_{\q}(N)$ is a non-central subgroup of $G(R/\q)$ normalized by $E(R/\q)$, and hence by the same token we have
$N\ge E_P(R/\q,\q')$ for some ideal $\q'\neq 0$ of $R/\q$. Let $\tilde\alpha\in\Phi_P^+$ be a maximal root,
and let $\beta\in\Phi_P^+$ be a simple relative root such that $\tilde\alpha-\beta\in\Phi_P$. Pick
$u\in V_{\tilde\alpha-\beta}\sm\{0\}$ such that $\rho_\q(u)\in V_{\tilde\alpha-\beta}\otimes_R \q'\sm\{0\}$
and
$X_{\tilde\alpha-\beta}(\rho_\q(u))\in \rho_q(N)$. Then there is $h\in G(R,\q)$ such that
$X_{\tilde\alpha-\beta}(u)h\in N$.
By Lemma~\ref{lem:ABe} for any generating system
$e_1,\ldots,e_n$ of $V_\beta$
there is $1\le i\le n$ such that $N_{\tilde\alpha-\beta,\beta, 1,1}(\rho_\q(u),\rho_\q(e_i))\neq 0$,
and hence $N_{\tilde\alpha-\beta,\beta, 1,1}(u,e_i)\not\in \q V_{\tilde\alpha}$.
By Lemma~\ref{lem:E_P} one has
$$
[X_{\tilde\alpha-\beta}(u)h,X_\beta(e)]=[h,X_\beta(e)]^{X_{\tilde\alpha-\beta}(-u)}\cdot
X_{\tilde\alpha}(N_{\tilde\alpha-\beta,\beta, 1,1}(u,e_i))\in
E_P(R,\q)\cdot X_{\tilde\alpha}(N_{\tilde\alpha-\beta,\beta, 1,1}(u,e_i)).
$$
Hence $X_{\tilde\alpha}(N_{\tilde\alpha-\beta,\beta, 1,1}(u,e_i))\in N$. However, this contradicts the choice of $\q$.
\end{proof}

Thus, we have reduced the proof of the normal structure theorem to extraction of unipotents from $E(R)$-normalized subgroups.
In the following section we show that one can always extract a nontrivial root unipotent from an element that belongs to
a proper parabolic subgroup.

\section{Inside a parabolic subgroup}

As before, assume that $G$ is a reductive group scheme over a connected ring $R$.
We start with definitions of certain normal subgroups of the unipotent radical of a parabolic subgroup $P$ of $G$.
Choose a set $\Pi_P$ of simple relative roots and fix a total order on the relative root lattice
$\Z\Phi_P$ in the following way. First, order $\Pi_P$ arbitraraly. Then set $\alpha\ge\beta$
if the coordinate columns $\alpha_{\Pi_P}\ge\beta_{\Pi_P}$ with respect to
the lexicographical order.
Note that for this order we have
\begin{equation}\label{eq:order}
\alpha>\gamma\iff-\alpha<-\gamma\iff\alpha+\beta>\gamma+\beta
\end{equation}
for all $\alpha,\beta,\gamma\in\Z\Phi_P$.

For $\alpha\in\Z\Phi_P$ put
$$
\Phi_P^{>\alpha}=\{\gamma\in\Phi_P\mid\gamma>\alpha\}\text{ and }
\Phi_P^{<\alpha}=\{\gamma\in\Phi_P\mid\gamma<\alpha\}.
$$
If $\alpha>0$, then the set $\Phi_P^{>\alpha}$ is a closed set of roots and we denote by $U_P^{>\alpha}$
the subgroup scheme corresponding to $\Phi_P^{>\alpha}$ (see Lemma~\ref{lem:T-P}).
Similarly, if $\alpha<0$, then $U_P^{<\alpha}$ is the subgroup scheme corresponding to the closed set of roots
$\Phi_P^{<\alpha}$.
If $\alpha$ is the largest positive root of a given height $m$, then we denote $U_P^{>\alpha}$ by $U_P^{(m+1)}$.
In other words, $U_P^{(m+1)}(R)$ is the subgroup of $U_P(R)$ generated by all root subgroups $\X_\gamma(R)$ with
$\opn{ht}\gamma>m$.

\begin{lem}\label{lem:>alpha}
Let $\alpha,\beta,\alpha+\beta\in\Phi_P^+$, then
the subgroup $U_P^{>\alpha}(R)$ is normal in $P(R)$ and, moreover,
$$[U_P^{>\alpha}(R),\X_\beta(R)]\le U_P^{>\alpha+\beta}(R).$$

Similarly, for $\alpha,\beta,\alpha+\beta\in\Phi_P^-$ the subgroup $U_P^{<\alpha}(R)$ is normal in $P^-(R)$ and
$$[U_P^{<\alpha}(R),\X_\beta(R)]\le U_P^{<\alpha+\beta}(R).$$
\end{lem}

\begin{proof}
We prove the first formula by going down induction on $\alpha$.
If $\alpha$ is the maximal root, then $U_P^{>\alpha}=\{1\}$. Otherwise,
let $\gamma$ be the smallest root greater than $\alpha$.
By Lemma~\ref{lem:rootels}(iv) $U_P^{>\alpha}(R)=\X_\gamma(R)U_P^{>\gamma}(R)$.
Then
$$
[U_P^{>\alpha}(R),\X_\beta(R)]=
\conj{\X_\gamma(R)}{[U_P^{>\gamma}(R),\X_\beta(R)]}\cdot[\X_\gamma(R),\X_\beta(R)].
$$
If $\beta>0$, then by induction hypothesis  the first factor is contained in $U_P^{>\gamma+\beta}(R)$
whereas the second one lies in $\X_{\gamma+\beta}(R)U_P^{>\gamma+\beta}(R)$ by generalized Chevalley commutator formula~\eqref{eq:Chev}. Hence $U_P^{>\alpha}(R)$ is normal in $U_P(R)$. With this in mind
for an arbitrary $\beta$ the above arguments show that both factors lie in $U_P^{>\alpha+\beta}(R)$ as required.

Since $P$ is generated by $L_P$ and $U_P$ and $L_P$ normalizes each relative root subgroup, we conclude that
$U_P^{>\alpha}(R)$ is normal in $P(R)$.
The proof of the second formula is essentially the same.
\end{proof}

For the rest of the article we assume that each relative root system is endowed with
a total ordering, satisfying conditions~\eqref{eq:order}.

\begin{lem}\label{lem:InU}
Suppose that the absolute root system of $G$ is irreducible and its structure constants
are invertible in $R$. Let $P$ be a parabolic $R$-subgroup of $G$ with a Levi subgroup $L_P$,
such that $\opn{rank}(\Phi_P)\ge 2$.
For any $h\in U_P(R)\sm\{1\}$, the subgroup
$[h,E(R)]$ contains a root unipotent element $X_{\alpha}(v)$, $\alpha\in\Phi_P^+$,
$v\in V_\alpha\sm\{0\}$.
\end{lem}

\begin{proof}
By Lemma~\ref{lem:rootels}(iv)  we can write $h$ as a product
$$
h=\prod_{\alpha\in\Phi_P^+}X_\alpha(v_\alpha),\quad v_\alpha\in V_\alpha,
$$
where the order of factors coincides with the ascending order of roots. Let $\gamma=\gamma(h)\in\Phi_P^+$ be
the minimal root such that $v_{\gamma}\neq 0$.
If $\gamma$ is the maximal root of $\Phi_P^+$, then $h=X_\gamma(v_\gamma)$.
Since $\opn{rank}(\Phi_P)\ge 2$, there is a simple root $\beta\in\Phi_P^+$ such that $\gamma-\beta$ is a root
and all roots of shape $i\gamma-j\beta$ are positive.
By Lemma~\ref{lem:ABe} there is $v\in V_\beta$ such that
$N_{\gamma,-\beta,1,1}(v_{\gamma},v)\neq 0$. Then $h'=[h,X_{-\beta}(v)]\in U_P(R)\sm\{1\}$
and $\gamma(h')$ is not maximal. Since $[h',E(R)]\le[h,E(R)]$ (see remark after formula~\eqref{xyzz-1}),
we may assume from the beginning that $\gamma(h)$ is not maximal.

Under this assumption write $h=X_\gamma(v_\gamma)g$ for some $g\in U_P^{>\gamma}(R)$.
There is a root $\beta\in\Phi_P^+$ such that $\gamma+\beta\in\Phi_P^+$.
By Lemma~\ref{lem:ABe} there is $v\in V_\beta$ such that $N_{\gamma,\beta,1,1}(v_{\gamma},v)\neq 0$.
The commutator
$$
c=[h,X_\beta(v)]=\conj{X_\gamma(v_\gamma)}{[g,X_\beta(v)]}\cdot [X_\gamma(v_\gamma),X_\beta(v)].
$$
By Lemma~\ref{lem:>alpha} the first factor belongs to $U_P^{>\gamma+\beta}(R)$ and by the generalized Chevalley commutator formula
the second one is equivalent to $X_{\gamma+\beta}\bigl(N_{\gamma,\beta,1,1}(v_{\gamma},v)\bigr)$
modulo $U_P^{>\gamma+\beta}(R)$. Thus, $c\in U_P(R)\cap[h,E(R)]$ and $\gamma(c)=\gamma(h)+\beta>\gamma(h)$.
Therefore, we may proceed by
induction to arrive at the case, where $\gamma(c)$ is the maximal root, i.\,e. $c=X_{\gamma(c)}(u)\in[h,E(R)]$
for some $u\in V_{\gamma(c)}\sm\{0\}$.
\end{proof}

\begin{lem}\label{lem:Levi-repr}
Let $P$ be a stricly proper parabolic subgroup of $G$ with a Levi subgroup
$L_P$. Let $z\in L_P(R)$
be such that $[z,U_P(R)]\subseteq U_P^{(2)}(R)$. Then $z\in\Center(G)(R)$.
\end{lem}

\begin{proof}
Replace $G$ by $G^{ad}=G/\Center(G)$. Then we need to show that $z=1$.
By Lemma~\ref{lem:rootels} for any $\alpha\in\Phi_P$ there exists a set of degree $i$ homogeneous polynomial maps
$\varphi^i_{z,\alpha}: V_\alpha\to V_{i\alpha}$, $i\ge 1$, such that for
any commutative
$R$-algebra $A$ and
any $w\in V_\alpha\otimes_R A$ one has
$$
zX_\alpha(w)z^{-1}=\prod_{i\ge 1}X_{i\alpha}(\varphi^i_{z,\alpha}(w)).
$$
If $\alpha\in\Phi_P^+$ has height $>1$, then $zX_\alpha(w)z^{-1}\in U_P^{(2)}(A)$ automatically. Thus, the inclusion
$[z,U_P(R)]\subseteq U_P^{(2)}(R)$ is equivalent to $\varphi^1_{z,\alpha}(v)=v$ for any $\alpha$ of height 1 and any
$v\in V_\alpha$.
Since $\varphi^1_{z,\alpha}$ is linear, $\varphi^1_{z,\alpha}(v)=v$ for any $v\in V_\alpha$
implies $\varphi^1_{z,\alpha}(w)=w$ for any $w\in V_\alpha\otimes_R A$.

In other words, the conjugation action of $L_P$ on $U_P$
induces a linear group scheme representation $\rho:L_P\to \GL(V)$, where
$V=\bigoplus_{\opn{ht}\alpha=1} V_\alpha$, via the natural isomorphism $U_P/U_P^{(2)}\cong W(V)$.
By our assumption, we have $\rho(z)=\sum_{\opn{ht}\alpha=1}\varphi^1_{z,\alpha}=\mathrm{id}_V$,
i.e. $z\in\ker\rho(R)$. Clearly, $\ker\rho$ is a closed subgroup scheme of  $L_P$.
We intend to show that it is trivial.

Since $L_P$ is a group scheme locally of finite type over $R$, $\ker\rho$ is also locally of finite type. Then
by~\cite[Exp. $\mathrm{VI_B}$, Corollaire 2.10]{SGA} in order to prove that $\ker\rho=1$, it is enough to prove
that $(\ker\rho)_{k(\mathfrak{p})}=1$ for every $\mathfrak{p}\in\opn{Spec} R$. For the latter it suffices to
show that the scheme $(\ker\rho)_F$ is trivial, where $F=\overline{k(\mathfrak{p})}$ is the algebraic closure of
$k(\mathfrak{p})$.

Thus, without loss of generality we may assume that $G$ is a split reductive group over an algebraically closed field $R=F$,
and $P$ is a standard parabolic subgroup of $G$ with respect to a split maximal $F$-torus $T\le L_P$.
We denote by $\Phi$ the root system of $G$ over $F$, and $P$ has type $J$, as in Lemma~\ref{lem:relroots}.
Every normal subgroup of $L_P(F)$ is either central in $L_P(F)$ or contains the elementary subgroup, corresponding to a whole irreducible component
of the root system of $L_P(F)$ (which is non-trivial unless $L_P=T$), e.g. by~\cite{Tits64}.
Clearly, $\ker\rho(F)$ does not contain the elementary subgroup of this form.
Indeed, since the sum of all simple roots of $\Phi$ is a root,
for any simple root $\beta\not\in J$ there exists a root $\alpha\in\pi^{-1}(\opn{ht}\alpha=1)$ such that
$\alpha+\beta\in\Phi$ and
$\alpha-\beta\not\in\Phi$; then $[x_\beta(1),x_\alpha(1)]\neq 1$. If $j$ is a maximal positive integer such
that $\alpha+j\beta\in\Phi$, then
$\alpha+j\beta-(-\beta)\notin\Phi$, and hence $[x_{-\beta}(1),x_{\alpha+j\beta}(1)]\neq 1$.
Then $\ker\rho(F)\le\Center(L_P(F))$. One has $\Center(L_P(F))=\Center(L_P)(F)\le T(F)$ (e.g. by~\cite{AbeHurley}),
hence $\ker\rho(F)\le T(F)$.

Similarly, $\Lie(\ker\rho)\le \Lie(T)$. Indeed,
the Lie algebra $\Lie(L_P)$, and hence its subalgebra $\Lie(\ker\rho)$, decomposes into a direct sum of $\Lie(T)$
and 1-dimensional
root eigenspaces of $T$. Since $\ker\rho$ is a normal subgroup of $L_P$,
its Lie algebra is invariant under the adjoint action of $L_P(F)$ on $\Lie(L_P)$, and in particular
under the action of the Weyl group of $L_P(F)$.
Hence $\Lie(\ker\rho)$ either contains no root subspaces at all, or contains all root subspaces corresponding to one
irreducible component of the root system of $L_P$. One proves similarly to the above that it cannot contain such
a set of root subspaces.

Let $A$ be any $F$-algebra, then
any $x\in T(A)$ acts on root elements $x_\delta(a)$, $\delta\in\Phi$, $a\in A$, by means
of a character $\chi$ of the root lattice of $G$ (recall that $G$ is adjoint) with values in $A^\times$.
Since any $\alpha\in\Phi_P^+$ is a sum of simple
roots $\beta_i\in\Phi_P^+$, by~\cite[Lemma 4]{PetStavIso} any $\delta\in\pi^{-1}(\alpha)$ is a sum of roots
$\delta_i\in\pi^{-1}(\beta_i)$. If $x\in\ker\rho(A)$, we have $\chi(\delta_i)=1$ for all $i$, hence $\chi(\pm\delta)=1$
for any $\delta\in\pi^{-1}(\Phi_P^+)$, and thus $x=1$. This implies that both $\ker\rho(F)=1$ and $\Lie(\ker\rho)=0$.
In particular, $\dim(\ker\rho)=\dim(\Lie(\ker\rho))$, which implies that $\ker\rho$ is smooth. Since
$\ker\rho(F)=1$, we have $\ker\rho=1$, as required.
\end{proof}

For a relative root $\gamma\in\Phi_P$ denote by $U_{(\gamma)}$ the unipotent subgroup generated by
$\X_{i\gamma}$ for all $i\in\N$ such that $i\gamma\in\Phi_P$. In other words, if
we set $(\gamma)=\N\gamma\cap\Phi_P$, then $U_{(\gamma)}$ is the group scheme introduced in
Lemma~\ref{lem:T-P}.

\begin{cor}\label{cor:levi-cent-beta}
Let $P$ be a not necessarily strictly proper parabolic subgroup of $G$. Let $\beta\in\Phi_P^+$ be a simple root.
If $g\in L_P(R)$ satisfies $[g,\X_\beta(R)]\le U_P^{>\beta}(R)$, then $[g,U_{(\beta)}(R)]=1$ and $[g,U_{(-\beta)}(R)]=1$.
\end{cor}
\begin{proof}
Let $H=U_{\Z\beta\cap\Phi_P}$, the reductive subgroup scheme of $G$ that exists by Lemma~\ref{lem:T-P}. By the same Lemma,
it has two opposite
proper parabolic subgroups $Q=L_PU_{(\beta)}$ and $Q^-=L_QU_{(-\beta)}$ with the common Levi subgroup $L_Q=L_P$.
The assumption of the Lemma then implies $[g,U_Q(R)]\le U_Q^{(2)}(R)$.

Note that it is enough to prove the claim for $H^{ad}$ instead of $G$. Indeed, if it holds for $H^{ad}$, then
$[g,U_{(\pm\beta)}]\le C(H)(R)$. However, $[g,U_{(\pm\beta)}(R)]\le U_{(\pm\beta)}(R)$ and
$C(H)(R)\cap U_{(\pm\beta)}(R)=1$, hence $[g,U_{(\pm\beta)}(R)]=1$.

Assume that $G=H^{ad}$. Since $R$ is connected, by~\cite[Exp. XXIV, Proposition 5.10]{SGA} $G$ is a finite direct product of
adjoint semisimple
reductive $R$-groups $G_i$, $1\le i\le n$, such that every $G_i$ itself does not have any non-trivial normal
semi-simple subgroups. Consider the action of the torus $S\le C(L_P)$ constructed in Lemma~\ref{lem:relroots} on $G_i$.
If this action is non-trivial, then the projection of $g$ into $G_i(R)$ is trivial by Lemma~\ref{lem:Levi-repr}, since
the projection of $Q$ is a proper parabolic subgroup of $G_i$. Hence $g$ belongs to the product of groups $G_i(R)$
such that $S$ centralizes $G_i$, and, in particular, $G_i\le L_P$. Then, since $G_i$ is normal in $G$,
$$[G_i(R),U_{(\beta)}(R)]\le G_i(R)\cap U_{(\pm \beta)}(R)\le L_P(R)\cap  U_{(\pm \beta)}(R) =1,$$
and hence $g$ centralizes $U_{(\pm \beta)}(R)$.

\end{proof}

\begin{lem}\label{lem:InP}
Suppose that the absolute root system of $G$ is irreducible and its structure constants
are invertible in $R$. Let $P$ be a parabolic $R$-subgroup of $G$ with a Levi subgroup $L_P$,
such that $\opn{rank}(\Phi_P)\ge 2$.
Let $H$ be a subgroup of $G(R)$ normalized by the elementary subgroup $E(R)$.
Suppose that $H\cap P(R)$ is not contained in $\Center\bigl(G(R)\bigr)$.
Then $H$ contains a root unipotent element $X_{\alpha}(v)$, $\alpha\in\Phi_P$,
$v\in V_\alpha\sm\{0\}$.
\end{lem}

\begin{proof}
This is proved similarly to the corresponding statement~\cite[Theorem~1]{VavStav} for Chevalley groups.
Let $g\in H\cap P(R)$ be a non-central element.
Write $g=zu$, where $z\in L_P(R)$, $u\in U_P(R)=U_{\Phi_P^+}(R)$.
For any $\alpha\in\Phi_P^+$ and $v\in V_\alpha$, one has
$$
[g,X_{\alpha}(v)]=\conj{z}{[u,X_{\alpha}(v)]}\cdot[z,X_{\alpha}(v)].
$$
Clearly, $\conj{z}{[u,X_{\alpha}(v)]}\in [U_P,U_P](R)$.
If $[z,X_{\alpha}(v)]\not\in [U_P,U_P](R)$, then $1\neq [g,X_{\alpha}(v)]\in U_P(R)\cap H$. Then $H$
contains a root unipotent element by Lemma~\ref{lem:InU}.

Assume that $[z,X_{\alpha}(v)]\in [U_P,U_P](R)$
for any $\alpha\in\Phi_P^+$ and $v\in V_\alpha$. Then $[z,U_P(R)]\le U_P^{(2)}(R)$,
and hence $z\in\Center(G)(R)$ by Lemma~\ref{lem:Levi-repr}.
Then $[g,E_P(R)]=[u,E_P(R)]$. Then
$[u,E_P(R)]\le H$ contains a root unipotent element by Lemma~\ref{lem:InU}.
\end{proof}

\section{Extraction from the main Gauss cell}

As before, $G$ is a reductive group scheme over $R$
and $P$ is a proper parabolic $R$-subgroup of $G$.
Consider the $R$-subscheme of $G$
$$
\Omega_P=U_PL_PU_{P^-}.
$$
This is an open subscheme isomorphic as an $R$-scheme
to the direct product $U_P\times L_P\times U_{P^-}$, see~\cite[Exp. XXVI, Remarque 4.3.6]{SGA}.
We call $\Omega_P$ \emph{the main Gauss cell} associated with $P$.

In this section we show how to extract a unipotent from $\Omega_P(R)$.
The following easy lemma will be used several times.

\begin{lem}[{see~\cite[Part 1, 1.5]{Jantzen}}]\label{lem:open-fields}
Let $G$ be an affine scheme over a commutative ring $R$, \ $\Omega$ an open
subscheme of $G$, and $g\in G(R)$. Then $g\in\Omega(R)$ if and only if for all maximal ideals
$\m$ of $R$ one has $\rho_\m(g)\in \Omega(R/\m)$.
\end{lem}

\begin{proof}
By definition of an open set $\Omega=D(\q)$ for some ideal $\q$ of the affine algebra
$A=R[G]$ of the scheme $G$. Then
$$
g\in\Omega\iff g(\q)R=R\iff \rho_\m\bigl(g(\q)\bigr)\ne\{0\}\text{ for all maximal ideals }\m\text{ of }R.
$$
But the latter is equivalent to the condition $\rho_\m(g)\in \Omega(R/\m)$
for all maximal ideals $\m$ of $R$.
\end{proof}

From now and until the end of this section, assume that $R$ is connected.

\begin{lem}\label{lem:u-cent-field}
Let $R$ be a field and $P$ a parabolic subgroup of $G$ over $R$.
If $g\in G(R)$ commutes with $U_P(R)$, then $g\in P(R)$.
\end{lem}

\begin{proof}
Let $Q\le P$ be a minimal parabolic subgroup of $G$ over $R$, let $L_Q$ be its Levi subgroup contained in $L_P$,
and let $Q^-$ be the opposite minimal parabolic subgroup contained in $P^-$. Let $S\le L_Q$ be the maximal split
subtorus of $\Center(L_Q)$. Reduced Bruhat decomposition implies
that $g=uwv$, where $u\in U_Q(R)$, $w\in N_G(S)(R)$,
$v\in (U_Q)_w(R)=\{x\in U_Q(R)\mid wxw^{-1}\in U_{Q^-}(R)\}$, and $u,v$, and the class of $w$ in the Weyl group of $\Phi_Q$
are unique.
We have $w\in L_P(R)$ if and only if
$w$ is a product of elementary reflections $w_{\alpha_i}$ for some simple roots $\alpha_i\in\Phi_Q$
such that $X_{\alpha_i}\le L_P$~\cite[Th\'eor\`eme 5.15, Proposition 5.17]{BorelTits}.

Assume that $w\notin L_P(R)$. Then there is a simple root $\alpha\in\Phi_Q$ outside the root system of $L_P$
such that $w(\alpha)<0$.
Then $x=X_{\alpha}(\xi)$, belongs to $U_P(R)$ for all $\xi\in V_\alpha$. Since
$[g,x]=1$, we have $x(uwv)=(uwv)x$.
The rightmost factor in the reduced Bruhat decomposition of $x(uwv)=(xu)wv$ equals $v$.
On the other hand, $w(vx)w^{-1}\in U_{Q^-}(R)$, i.\,e. $vx\in(U_Q)_w(R)$, which means that the rightmost factor of the
reduced Bruhat decomposition of $(uwv)x=uw(vx)$ equals $vx$. But the latter is in contrary to the uniqness of the decomposition.
Thus, $w\in L_P$ and, since $u,v\in U_Q\le Q\le P$, we get the result.
%
\end{proof}

\begin{lem}\label{lem:centr-beta}
Suppose that the structure constants of the absolute root
system of $G$ are invertible in $R$. Let $Q$ be a parabolic $R$-subgroup of $G$,
such that $\Phi_Q$ is irreducible and $\opn{rank}(\Phi_Q)\ge 2$.
Let $\beta\in\Phi_Q$ be a simple relative root.
If $x\in U_{Q^\pm}(R)$ commutes
with $\X_\beta(R)$, then $x=\prod X_\alpha(u_\alpha)$, where $\alpha$ ranges over the set of
roots of $\Phi_Q^\pm$ such that $\alpha+\beta\notin\Phi_Q\cup\{0\}$.
\end{lem}

\begin{proof}
First, consider the case where $x\in U_{Q}(R)$.
Write $x=\prod_{\alpha\in\Phi_Q^+} X_{\alpha}(u_\alpha)$,
where $u_\alpha\in V_{\alpha}$ and the order of factors coincides with the
descending order of roots. Let $\gamma$ be the smallest root such that $\gamma+\beta\in\Phi_Q$ and $u_\gamma\ne0$.
Then $x=x'X_\gamma(u_\gamma)x''$, where $x'\in U_Q^{>\gamma}(R)$ and $x''$ commutes with $\X_\beta(R)$.
By Lemma~\ref{lem:ABe} there is $v\in V_\beta$ such that
$N_{\beta,\gamma,1,1}(v,u_\gamma)=w\neq 0$.
Now, by formula~\eqref{xyzz-1}, Lemma~\ref{lem:>alpha}, and generalized Chevalley commutator formula~\eqref{eq:Chev}
we have
$$
[X_\beta(v),x]=[X_\beta(v),x']\cdot\conj{x'}{[X_\beta(v),X_\gamma(u_\gamma)]}
\equiv X_{\beta+\gamma}(w)\mod U_Q^{>\beta+\gamma}(R).
$$
Hence the commutator is not equal to 1. The contradiction shows
that $\alpha+\beta\notin\Phi_Q$ for all $\alpha\in\Phi^+_Q$ such that $u_\alpha\ne0$.

Second, assume that $x=\prod X_{\alpha}(u_{\alpha})$, where $\alpha$ ranges over the set
$\Phi_Q^-\sm\N\cdot(-\beta)$, $u_{\alpha}\in V_{\alpha}$, and the order of factors
coincides with the ascending order of roots.
In other words, in this case $x$ lies in the unipotent radical of the smallest parabolic subgroup $P$
containing $Q^-$ and $\X_\beta$.
Let $\gamma$ be the largest root such that $\gamma+\beta\in\Phi$ and $u_\gamma\ne0$.
Since $\beta$ is a simple root and $\gamma<0$, the root $\gamma+\beta$ is negative.
Write $x=x'X_\gamma(u_\gamma)x''$, where $x'\in U_Q^{<\gamma}$ and $x''$ commutes with $\X_\beta(R)$.
Similarly to the first pragraph of the proof one shows that the commutator
$[x,X_\beta(v)]\not\equiv 1\mod U_Q^{<\gamma+\beta}$, which implies the result.

Finally, let $x\in U_{Q^-}(R)$ be an arbitrary element. Write $x=x_1x_2$, where $x_1\in U_{Q^-}(R)\cap L_P$
(i.\,e. $x_1$ is a product of root elements corresponding to the roots in $\N(-\beta)$)
and $x_2\in U_P$.
For any $v\in V_\beta$ one has
$$
1=[X_\beta(v),x_1x_2]=[X_\beta(v),x_1]\cdot\conj{x_1}{[X_\beta(v),x_2]},
$$
where $[X_\beta(v),x_1]\in L_P(R)$ and $\conj{x_1}{[X_\beta(v),x_2]}\in U_P(R)$, hence
$$
1=[X_\beta(v),x_1]=[X_\beta(v),x_2].
$$
By the previous case we conclude that $x_2$ is a product of root elements corresponding to $\alpha\in\Phi_Q^-$
such that $\alpha+\beta\notin\Phi_Q$. By definition of $x_2$ we also have $\alpha+\beta\neq 0$.
It remains to show that $x_1=1$.

Assume that $x_1\neq 1$. We can write $x_1=\prod_{i=1}^k X_{-i\beta}(u_{-i\beta})$, $u_{-i\beta}\in V_{-i\beta}$.
Since $\Phi_Q$ is irreducible and its rank $\ge 2$, there exists
a simple relative root $\alpha\in\Phi_Q^+$ such that $\alpha+\beta\in\Phi_Q$. Then $\alpha+i\beta\in\Phi_Q$
for any $i\ge 1$ such that $i\beta\in\Phi_Q$ by Lemma~\ref{lem:adj-simple-roots}.
Let $j\ge 1$ be the smallest natural number such that $u_{-j\beta}\neq 0$. Since $-j\beta-\alpha\in\Phi_Q$,
by Lemma~\ref{lem:ABe} there is
$u\in V_{-\alpha}$ such that $N_{-j\beta,-\alpha,1,1}(u_{-j\beta},u)=w\neq 0$.
Set $y=[x_1,X_{-\alpha}(w)]$. Similarly to the first paragraph of the proof
$y$ equals $X_{-j\beta-\alpha}(w)$ modulo $U_Q^{<-j\beta-\alpha}$.

On the other hand, $\beta-\alpha\notin\Phi_Q$ as $\alpha$ and $\beta$ are simple roots.
Hence $[X_\beta(v),X_{-\alpha}(u)]=1$ for any $v\in V_\beta$. Therefore,
$$
[X_\beta(v),y]=[X_\beta(v),[x_1,X_{-\alpha}(w)]]=1
$$
for any $v\in V_\beta$. By the previous case this implies that $y$ cannot contain
$X_{-j\beta-\alpha}(u)$ with non-zero $u$ in its reduced decomposition, a contradiction. This shows that $x_1=1$.
\end{proof}

\begin{lem}\label{lem:small-levi-b}
Under the hypothesis of Lemma~$\ref{lem:centr-beta}$, let $m\ge 1$ be the maximal positive integer such that
$m\beta\in\Phi_Q$. If $x\in U_{(\beta)}(R)L_Q(R)U_{(-\beta)}(R)$ commutes
with $\X_\beta(R)$, then $x\in\X_{m\beta}(R)L_Q(R)$.
\end{lem}

\begin{proof}
Assume that $x=ahb$ for some $a\in U_{(\beta)}(R)$, $h\in L_Q(R)$, and $b\in U_{(-\beta)}(R)\sm\{1\}$.
Put $b'=hb^{-1}h^{-1}$ and write it in the form $b'=X_{-j\beta}(u)b''$, where $u\in V_{-j\beta}\sm\{0\}$ and
$b''\in U_Q^{<-j\beta}$.
Let $\alpha\neq\beta\in\Phi_Q$ be a simple relative root adjacent to $\beta$.
Since $i\beta-\alpha\notin\Phi_Q$ (coefficients at simple roots have different signs),
the generalized Chevalley commutator formula implies that
$[\X_{-\alpha}(R),U_{(\beta)}(R)]=1$. Hence,  $\X_{-\alpha}(R)$ commutes with $a$.
Since $\X_\beta(R)$ commutes also with $x$, we have $\bigl[[\X_{-\alpha}(R),x^{-1}],\X_\beta(R)\bigr]=1$.
On the other hand, by Lemma~\ref{lem:adj-simple-roots}
$-j\beta-\alpha\in\Phi_Q$. Therefore, by Lemma~\ref{lem:ABe} there is $v\in V_{-\alpha}$ such that
$N_{-\alpha,-j\beta,1,1}(u,v)\neq 0$. Then,
\begin{multline*}
c=[X_{-\alpha}(v),x^{-1}]=[X_{-\alpha}(v),h^{-1}X_{-j\beta}(u)b'a^{-1}]=\\
[X_{-\alpha}(v),h^{-1}]\cdot
[X_{-\alpha}(v),X_{-j\beta}(u)]^h\cdot
\conj{h^{-1}X_{-j\beta}(u)}{[X_{-\alpha}(v),b']}
\end{multline*}
By Lemma~\ref{lem:rootels} the first factor belongs to $U_{(-\alpha)}$, whereas the second one is equal to
$X_{-\alpha-j\beta}(w)$ modulo $U_Q^{<-\alpha-j\beta}$ for some $w\in V_{-\alpha-j\beta}\sm\{0\}$.
By Lemma~\ref{lem:>alpha} the last factor lies in $U_Q^{<-\alpha-j\beta}$.
Thus, $c$ contains $X_{-\alpha-j\beta}(w)$ in its reduced decomposition and by Lemma~\ref{lem:centr-beta}
cannot commute with $X_\beta(R)$ as $(-\alpha-j\beta)+\beta\in\Phi_Q$.
The contradiction shows that $b=1$, and $x=ah$, where $a\in U_{(\beta)}(R)$ and $h\in L_Q(R)$.
Then for every $v\in V_\beta$ we have
$$
1=[X_{\beta}(v),x^{-1}]=[X_{\beta}(v),h^{-1}]\cdot[X_{\beta}(v),a^{-1}]^h.
$$
By the generalized Chevalley commutator formula we have
$[X_{\beta}(v),a^{-1}]^h\in U_{\{k\beta\mid k\ge 2\}}(R)$, and hence
$[\X_\beta(R),h^{-1}]\le U_{\{k\beta\mid k\ge 2\}}(R)$. By Corollary~\ref{cor:levi-cent-beta} this implies
that $[h,U_{(\beta)}(R)]=1$. Then $[a,\X_\beta(R)]=1$.
By Lemma~\ref{lem:centr-beta} this implies that $a\in\X_{m\beta}(R)$.
\end{proof}

The following easy fact will be used twice in the proof of Lemma~\ref{lem:InPQ}.

\begin{lem}\label{lem:omega}
Let $P$ be a parabolic subgroup of $G$
and $g=ahu\in\Omega_P(R)$, where $a\in U_P(R)$, $h\in L_P(R)$, and $u\in U_P^-(R)$.
If an element $x\in L_P(R)$ commutes with $g$, then it commutes with $a$, $h$, and $u$.
\end{lem}

\begin{proof}
By the condition we have $1=[x,ahu]^a=[a^{-1},x]\cdot[x,hu]$.
Hence $[x,a^{-1}]=[x,hu]\in U_P(R)\cap P^-(R)=\{1\}$. Therefore,
$x$ commutes with $a$ and $[x,hu]=[x,h]\cdot\conj{h}[x,u]=1$. Now, $[h,x]^h=[x,u]\in L_P(R)\cap U_P^-(R)=\{1\}$.
Thus, $x$ commutes with $u$ and $h$ as well.
\end{proof}

\begin{lem}\label{lem:InPQ}
Suppose that the absolute root system of $G$ is irreducible and its structure constants
are invertible in $R$. Let $Q$ be a parabolic $R$-subgroup of $G$ with a Levi subgroup $L_Q$
such that $\opn{rank}(\Phi_Q)\ge 2$.
For a proper parabolic subgroup $P\ge Q$ choose $L_P\ge L_Q$. Then, given $g\in\Omega_P(R)\sm\Center\bigl(G(R)\bigr)$
there exists a proper parabolic subgroup $M\ge L_Q$ of $G$ such that
$[g^{E(R)}\cap M(R),E(R)]\neq 1$.
%
\end{lem}

\begin{proof}
By Theorem~\ref{thm:E-cent} $\Center\bigl(G(R)\bigr)=\Center(G)(R)$ is contained in every parabolic subgroup, hence
by Lemma~\ref{lem:HallWitt} it is enough to prove the claim under the assumption that $G$ is adjoint.
Let $\beta\in\Phi^Q$ be a simple relative root. Denote by $Q_{(\beta)}$ the smallest parabolic subgroup
of $G$ containing $Q$ and $\X_{-\beta}$. Note that since $\opn{rank}(\Phi_Q)\ge 2$, the parabolic subgroup $Q_{(\beta)}$
is proper. In case $P=Q$ we substitute $P$ by $Q_{(\beta)}$ using the fact that
$\Omega_Q\subseteq\Omega_{Q_{(\beta)}}$. Thus, we may assume that $L_P$ contains $\X_\beta$ for some simple
relative root $\beta\in\Phi_Q$.

Write $g=ahu$, where $a\in U_P(R)\le U_Q(R)$ and $h\in L_P(R)$, and $u\in U_P^-(R)$.
Denote by $\opn{lev}_P(g)$ the largest vector $\alpha\in\Phi_Q^+\cup\{0\}$ such that $a\in U_Q^{>\alpha}(R)$.
We proceed by going down induction on $\opn{lev}_P(g)$. If $\opn{lev}_P(g)$ is the maximal relative root,
then $a=1$ and $g$ already lies in the parabolic subgroup $P^-$. Otherwise,
consider the element
$$
g'=[X_\beta(v),g]^a=[a^{-1},X_\beta(v)]\cdot [X_\beta(v),b]\in g^{E(R)}.
$$
Here $[X_\beta(v),b]\in P^-(R)$, while $[a^{-1},X_\beta(v)]\in U_Q^{>\alpha+\beta}(R)$.
Thus, $g'\in\Omega_P(R)$ and $\opn{lev}_P(g')>\opn{lev}_P(g)$. Hence, if $g'$ does not belong to
the center at least for one simple root $\beta\in\Phi_Q$ such that $\X_\beta\le L_P$ and
$v\in V_\beta$, we are done by the induction hypothesis.

Now, assume that $g'$ lies in the center of $G(R)$ for all $\beta$ as above and all $v\in V_\beta$. Since $G$ is adjoint,
we have $g'=1$, i.\,e. $\X_\beta(R)$ commutes with $g$. By Lemma~\ref{lem:omega}
$\X_\beta(R)$ commutes with $a$, $h$, and $u$ as well.
Denote by $Q'$ the parabloic subgroup $Q\cap L_P$ of the reductive group $L_P$ and
let $\Delta_{Q'}$ be the root system of $L_P$ relative to $Q'$.
Note that $h\in L_P$ commutes with $\X_\beta(R)$ for all simple roots $\beta\in\Delta_{Q'}$.
Therefore, for any maximal ideal $\m$ of $R$ the image of $h$ in $L_P(R/\m)$
commutes with $\X_\beta(R/\m)$.
Since the structure constants of the absolute root system of $G$ are invertible in $R$,
the same is true for the absolute root system of $L_P$.
By Lemma~\ref{lem:const} the groups $\X_\beta(R/\m)$ span $U_{Q'}(R/\m)$ as $\beta$ ranges
over all simple roots of $\Delta_{Q'}$. Thus, the image of $h$ in $L_P(R/\m)$ commutes with
$U_{Q'}(R/\m)$, and, hence, belongs to $Q'(R/\m)\subseteq\Omega_{Q'}(R/\m)$ by Lemma~\ref{lem:u-cent-field}.
Since $\Omega_{Q'}$ is an open subscheme of $L_P$,
by Lemma~\ref{lem:open-fields} we have $h\in\Omega_{Q'}(R)$.
Then
$$
g=ahu\in U_P(R)\cdot \Omega_{Q'}(R)\cdot U_{P^-}(R)\subseteq \Omega_Q(R).
$$

Let $P'$ be the minimal parabolic subgroups of $G$, containing $Q$ and $\X_{-\beta}$.
Since $g\in\Omega_Q(R)\subseteq\Omega_{P'}(R)$, we can write $g=a'h'u'$ for some
$a'\in U_{P'}$, $h'\in L_{P'}$, and $u'\in U^-_{P'}$. Recall that $g$ commutes with
$\X_\beta(R)$. Then, Lemma~\ref{lem:omega} asserts that $a'$, $h'$, and $u'$ commute with
$\X_\beta(R)$ as well.
By Lemmas~\ref{lem:small-levi-b}
and~\ref{lem:centr-beta} this implies that $g$ is a product of root elements of the form $X_\alpha(u_\alpha)$ where
all $\alpha$ satisfy $\alpha+\beta\notin\Phi_Q$, and of an element of $L_Q(R)$. Then by Lemma~\ref{lem:parab-centr-root}
there is a proper parabolic set of relative roots $\Sigma\subset\Phi_Q$ such that $g\in U_\Sigma(R)$ in the notation
of Lemma~\ref{lem:T-P}. Then we take $M=U_\Sigma$.
\end{proof}

The following statements follow immediately from Lemmas~\ref{lem:InPQ} and~\ref{lem:InP}.

\begin{cor}\label{cor:InPQ}
Under assumptions of Lemma~$\ref{lem:InPQ}$ the subgroup $g^{E(R)}$ contains a nontrivial root unipotent element.
\end{cor}

\begin{cor}\label{cor:UnderRad}
Suppose that $G$ is semisimple, $R$ is connected, the absolute root system $\Phi$ of $G$ is irreducible,
its structure constants are invertible in $R$, and $G$ contains $(\mathbb{G}_{m,R})^2$.
If $H\cap G(R,\opn{Rad}R)\not\subseteq\Center\bigl(G(R)\bigr)$,
then $H$ contains a nontrivial root unipotent element.
\end{cor}

\begin{proof}
Since $G$ contains $(\mathbb{G}_{m,R})^2$, there is a parabolic subgroup $Q$ of $G$ such that
$\opn{rank}(\Phi_Q)\ge 2$. Take $g\in (H\cap G(R,\opn{Rad}R))\setminus\Center\bigl(G(R)\bigr)$.
Then $\rho_\m(g)\in\Omega_Q(R/\m)$
for any maximal ideal $\m$ of $R$. Then by Lemma~\ref{lem:open-fields} $g\in\Omega_Q(R)$,
and the result follows from Corollary~\ref{cor:InPQ}.
\end{proof}


\section{Proof of the normal structure theorem}\label{NormalSec}

The next lemma is a key step to the proof of the normal structure theorem.

\begin{lem}\label{lem:generic}
Let $G$ be a reductive group scheme over a Noetherian connected ring $K$, and let $Q$ be a parabolic $K$-subgroup of $G$ such that
$\Phi_Q$ is irreducible and $\opn{rank}(\Phi_Q)\ge 2$.
For any parabolic subgroup $Q\lneqq P$
there exist elements $c_{ij}\in g^{E(A)}\cap\Omega_P(A)$, $i=1,\dots,l$, $j=1,\dots,n$,
satisfying the following property.
Let $S$ be a subscheme defined by the formula
$$
S(R)=\{h\in G(R)\mid h(c_{ij})\in\Center(G)(R)\text{ for all }i=1,\dots,l\text{ and }j=1,\dots,n\}
$$
for any $K$-algebra $R$.
Then the group of points $S(F)$ does not contain $E(F)$ for any $K$-algebra $F$
that is a field.
\end{lem}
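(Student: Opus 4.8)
The plan is to exploit the "generic element" philosophy: whatever we can prove for the generic element $g\in G(A)$ can be specialized along any homomorphism $A\to R$. Concretely, I would first apply Corollary~\ref{cor:InPQ} to the generic element $g\in\Omega_P(A)\subseteq G(A)$ with respect to the parabolic $Q$. Note that $g$ is not in $C(G(A))=\Center(G)(A)$ — indeed the counit $e_K$ sends $g$ to the identity of $G(K)$, so if $g$ were central, every element $h=h(g)$ of every $G(R)$ would be central, which is false. Thus Corollary~\ref{cor:InPQ} gives a nontrivial root unipotent $X_\alpha(v)\in g^{E(A)}$ for some $\alpha\in\Phi_P$ and $0\neq v\in V_\alpha\otimes_K A$. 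Then, using that the irreducible component of $\Phi_Q$ containing $\alpha$ has rank $\ge 2$ and Lemma~\ref{lem:ABe} together with the generalized Chevalley commutator formula, I would produce finitely many further conjugates/commutators of this unipotent, still lying in $g^{E(A)}$, whose root-element components witness (over the generic ring $A$) that $g^{E(A)}$ does not centralize $E(A)$. The elements $c_{ij}$ of the statement will be assembled from these finitely many conjugates of $g$ by elements of $E(A)$.

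The key point for the finite list $\{c_{ij}\}$ is that $g^{E(A)}\not\le C(G)(A)$ can be certified by finitely much data. Since $E(A)$ is generated by the countably many root elements $X_\beta(e_k)$ where $\beta$ ranges over $\Phi_P$ and $e_k$ over a finite generating set of each $V_\beta$, and since "$[y,z]\neq 1$" for fixed $y\in g^{E(A)}$ and $z\in E(A)$ reduces to finitely many conjugates $y^{w}$, $w$ a word in those generators, one sees that $g^{E(A)}$ failing to centralize $E(A)$ is detected by the non-vanishing of a single element $h(c_{ij})-$ (its central correction) for some $i,j$. Concretely: take $c_{ij}$ to run over those conjugates $g^{w_{ij}}$ of the generic element by a finite set of words $w_{ij}\in E(A)$ such that, among the pairs $(c_{ij}, X_{\beta_j}(e_{?}))$, at least one commutator is a nontrivial root element over $A$. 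This is exactly the configuration produced by Corollaries~\ref{cor:InPQ} and Lemma~\ref{lem:ABe} above; one only has to bookkeep that all conjugating elements used there are products of $X_\beta(V_\beta\otimes_K A)$, hence lie in $E(A)$, and that there are finitely many of them because each irreducible component of $\Phi_P$ is finite.

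To finish, suppose for contradiction that $E(F)\subseteq S(F)$ for some $K$-algebra $F$ that is a field, where $F$ receives a homomorphism $\varphi:A\to F$ (any $F$-point of $G$; but here we just need $F$ as a $K$-algebra, so take $\varphi$ arbitrary, say a $K$-algebra map — if none exists the statement is vacuous, but in the application $F$ will be a residue field of a localization of $A$-type rings, so $\varphi$ will be given). The condition $E(F)\subseteq S(F)$ says precisely that every $h\in E(F)$ sends each $c_{ij}$ into $C(G)(F)$; specializing $g$ along $\varphi$ turns $c_{ij}=g^{w_{ij}}$ into $\varphi(g)^{\varphi(w_{ij})}$, and $\varphi(w_{ij})\in E(F)$, so this reads $[\,\varphi(g)^{E(F)}\cap M_{ij}(F),\, E(F)\,]=1$ for the relevant parabolics $M_{ij}$, i.e. $\varphi(g)^{E(F)}$ centralizes $E(F)$ over the field $F$. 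By Theorem~\ref{thm:E-cent} this forces $\varphi(g)\in\Center(G)(F)=C(G(F))$. But the inequalities/non-vanishings built into the $c_{ij}$ over $A$ persist over $F$ as long as $\varphi$ does not kill the relevant structure constants, which are invertible in $K$ hence nonzero in $F$; this contradicts $\varphi(g)$ being central (the witnessing root-element commutator is nonzero in $F$). The main obstacle is the bookkeeping in the previous paragraph: one must check that the finitely many conjugating words and the finitely many target root elements extracted over $A$ really do force non-centrality after \emph{every} specialization to a field, and that passing to the adjoint quotient (as in the proof of Lemma~\ref{lem:InPQ}, via Lemma~\ref{lem:HallWitt}) does not lose this, so that the single scheme-theoretic condition defining $S$ captures it uniformly.
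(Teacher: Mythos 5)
There is a genuine gap in your proposal, and it is in the very first step. You write ``apply Corollary~\ref{cor:InPQ} to the generic element $g\in\Omega_P(A)\subseteq G(A)$,'' but $g=\mathrm{id}_A$ does \emph{not} lie in $\Omega_P(A)$: an element of $\Omega_P(A)$ is a $K$-algebra homomorphism $A\to A$ factoring through the ring of the proper open subscheme $\Omega_P$, and the identity of $A$ clearly has no reason to factor. This is not a minor technicality --- it is the whole difficulty the lemma is addressing. The paper's proof gets around it by covering $\Omega_P$ by principal opens $\operatorname{Sp}_K A_{s_i}$, observing that the \emph{localized} generic element $g_{s_i}=\lambda_{s_i}\in G(A_{s_i})$ tautologically lies in $\operatorname{Sp}_K A_{s_i}(A_{s_i})\subseteq\Omega_P(A_{s_i})$, and then using the clearing-denominators Lemma~\ref{lem:ClearDenom} (which is the reason Noetherianity and the earlier Lemma~\ref{lem:Bak} appear) to push the resulting commutator $[g_{s_i},d_{s_i}^{-1}]$ back down to an honest element $c_{ij}=[g,d^{-1}]\in G(A,s_i^m A)\cap\Omega_P(A)$. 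Your plan never confronts the requirement $c_{ij}\in\Omega_P(A)$ at all.

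A second, independent problem is your choice $c_{ij}=g^{w_{ij}}$ with $w_{ij}\in E(A)$. Then $h(c_{ij})=h(g)^{h(w_{ij})}=h^{h(w_{ij})}$, and since conjugation is an automorphism preserving the center, $h(c_{ij})\in C(G)(F)$ if and only if $h\in C(G)(F)$. So the subscheme $S$ you would build is just the center, which cannot be reconciled with the constraint $c_{ij}\in\Omega_P(A)$ (as above, conjugates of $g$ need not lie in $\Omega_P(A)$, and $g$ itself does not). The paper instead takes $c_{ij}=[g,d^{-1}]$, a \emph{commutator} of $g$ with an explicit elementary element $d$, which is both in $g^{E(A)}$ and, by the denominator-clearing, in $\Omega_P(A)$. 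You also conflate the specialization $\varphi:A\to F$ (a fixed point of $G(F)$) with the universally quantified $h\in E(F)$ appearing in the definition of $S$; in the actual argument, one must pick $h=X_\beta(u)$ concretely, show that its Gauss decomposition is $e\cdot X_\beta(u)$ so that $h(a)=e$, and compute $h(c_{ij})=[X_\beta(u),X_\alpha(s^k e_j)^{-1}]$, whose non-centrality is then a direct consequence of Lemma~\ref{lem:ABe} and the generalized Chevalley commutator formula. You explicitly flag ``non-centrality after \emph{every} specialization'' as the main obstacle but do not resolve it; this resolution --- the careful choice of $\alpha,\beta\in\Phi_Q^-$ with $X_\alpha\subseteq L_P$, $X_\beta\subseteq U_{P^-}$ together with the denominator-clearing --- is the entire content of the proof.
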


\begin{proof}
Since the subscheme $\Omega_P$ is open, hence it is covered by principal open subschemes
$\opn{Sp}_K A_{s_i}$, $i=1,\dots,l$. Let $\alpha,\beta\in\Phi_Q^-$ be two relative roots such that $-\alpha,-\beta$ are simple roots,
$\alpha+\beta\in\Phi_Q$, $\X_{\alpha}\subseteq L_P$, $\X_\beta\subseteq U_{P^-}$.
Let $\{e_1,\dots,e_n\}$ be a set of generators of the $K$-module $V_\alpha$.
Choose $i=1,\dots,l$ and $j=1,\dots,n$ and put $s=s_i$ and $v=e_j$.

Let $g_s$ be the image of $g$ in $G(A_s)$. In other words, $g_s=\lambda_s$ is the localization
homomorphism. It is a tautology that $g_s$ factors through $A_s$, which means that
$g_s\in\opn{Sp}_KA_s(A_s)\subseteq\Omega_P(A_s)$.
Decompose it into a product $g_s=ab$, where  $a\in U_P(A_s)$ and $b\in P^-(A_s)$.
Since $A$ is Noetherian, by Lemma~\ref{lem:Bak} there exists $m\in\N$ such that the restriction of $\lambda_s$ to
$s^mA$ is injective.
By Lemma~\ref{lem:ClearDenom}
there exists a positive integer $k\ge m$ such that
$$
d_s=\conj{a}{X_\alpha(s^kv)}\in\lambda_s\bigl(U_P(A,s^mA)\bigr)
\text{ and }
f_s=d_s^{g_s}=X_\alpha(s^kv)^{b}\in\lambda_s\bigl(P^-(A,s^mA)\bigr).
$$
Let $d=\lambda_s^{-1}(d_s)\in U_P(A,s^mA)$ and $f=\lambda_s^{-1}(f_s)\in P^-(A,s^mA)$
(by definition of $m$ these preimages are unique).
Put $c_{ij}=c=[g^{-1},d]\in G(A,s^mA)$. Then
$$
\lambda_s(c)=[g_s^{-1},d_s]=d_s^{g_s}d^{-1}=f_sd_s^{-1}\in\lambda_s\bigl(\Omega_P(A)\bigr).
$$
By definition of $m$ we have $c_{ij}=fd^{-1}\in\Omega_P(A)$.

Let $F$ be a field.
Put $h=X_\beta(u)\in\Omega_P(F)=\bigcup\limits_{i=1}^l\opn{Sp}_KA_{s_i}(F)$
for some $u\in V_\beta\otimes_KF\sm\{0\}$. Choose $i$ such that $h\in\opn{Sp}_KA_{s_i}$ and put
$s=s_i$. Then $h$ factors through $A_s$, i.\,e. $h=\wt h\circ\lambda_s$ for some
$\wt h:A_s\to F$. Since $h(g)=h$, we have
$
h=\wt h(g_s)=\wt h(a)\wt h(b)=e\cdot X_\beta(u).
$
The uniqueness of representation of $h$ as a product
of an element from $U_P(F)$ by an element from $P^-(F)$ implies that $h(a)=e$.
Thus, we get
$$
h(c_{ij})=[h(g^{-1}),h(d)]=[X_\beta(u)^{-1},X_\alpha(s^ke_j)].
$$
By Lemma~\ref{lem:ABe} there exists $j$ such that $N_{\alpha,\beta,1,1}(e_j,u)\ne 0$.
By the generalized Chevalley commutator formula $h(c_{ij})\notin\Center(G)(F)$. Thus,
$h\notin S(F)$, and hence $S(F)$ does not contain $E(F)$, as required.
\end{proof}

Now we are ready to prove the main theorem of the present paper.

\begin{proof}[Proof of Theorem~$\ref{thm:main}$]
Let $T\le G$ be the subgroup isomorphic to $(\mathbb{G}_{m,K})^2$.
By Lemma~\ref{lem:T-P} there are two parabolic subgroups $Q=Q^+$ and $Q^-$ of $G$
with a common Levi subgroup $L_Q=\Center_G(T)$.

Assume first that $K$ is Noetherian. Then $K$ is a finite product of connected Noetherian rings, so we can assume without loss
of generality that $K$ is Noetherian and connected.  The relative root system $\Phi_Q$ of Lemma~\ref{lem:relroots}
is irreducible and $\opn{rank}(\Phi_Q)\ge 2$. By Proposition~\ref{prop:stand-comm-formula} we have $E(K,\q)=[G(K,\q),E(K)]$.
Thus,
by Proposition~\ref{lem:reduction} it suffices to prove that if $H$ is not inside $\Center\bigl(G(K)\bigr)$, then
it contains a nontrivial relative root unipotent element.

Let $\beta\in\Phi_Q$ be a simple root. Then by Lemma~\ref{lem:T-P}
$P=P^+=U_{\Phi_Q^+\cup\Z\beta}$ is a parabolic subgroup distinct from
$Q$, with a Levi subgroup $L_P=U_{\Z\beta}$. Let
$S$ be a subscheme of $G$ satisfying conditions of Lemma~\ref{lem:generic}.
If there exists $h\in H$ such that $h(c_{ij})\notin\Center\bigl(G(K)\bigr)$, then $h(c_{ij})\in h^{E(K)}\cap \Omega_P(K)$
is subject to Corollary~\ref{cor:InPQ}, and hence $H$ contains a nontrivial
relative root unipotent element.

Otherwise $H$ is contained in the set of $K$-points of the subscheme $S$.
Consider a maximal ideal $\m$ of $K$. The image $\ovl H$ of the subgroup $H$ under the canonical
homomorphism $G(K)\to G(K/\m)$ is contained in $S(K/\m)$ and is normalized by
$E(K/\m)$. Tits' simplicity theorem~\cite{Tits64} then implies that either $H$ contains $E(K/\m)$, or $H$ is contained in
$\Center\bigl(G(K/\m)\bigr)$. Since $E(K/\m)$ is not contained in $S(K/\m)$,
$\ovl H$ is contained in $\Center\bigl(G(K/\m)\bigr)$.
It follows that the image of the subgroup
$[H,E(K)]$ vanishes in $G(K/\m)$, i.\,e. $[H,E(K)]\le G(K,\m)$.

Since $\m$ is an arbitrary maximal ideal, we have $[H,E(K)]\subseteq G(K,J)$, where $J$ is the Jacobson
radical of $K$. On the other hand, by Lemma~\ref{lem:HallWitt} $[H,E(K)]$ is not contained in the center of $G(K)$.
Hence, by Corollary~\ref{cor:UnderRad} this subgroup contains a nontrivial relative root unipotent element.
This completes the proof of the Noetherian case.

Now let $K$ be arbitrary. For any finite set of elements $\Lambda\subseteq H$ there is a finitely generated subring
$\wt K$ of $K$ and a semisimple reductive group scheme $\wt G$ over $\wt K$ with a subgroup $\wt T\cong (\mathbb{G}_{m,\wt K})^2$
such that $G=\wt G_K$, $T=\wt T_K$,
and $\Lambda\subseteq \wt G(\wt K)$. Clearly, by Lemma~\ref{lem:T-P} there is also a parabolic subgroup $\wt Q\le\wt G$
such that $Q=\wt Q_K$.

Since for any maximal ideal $\wt{\m}$ of $\wt K$ there is a maximal ideal
$\m$ of $K$ such that $\wt{\m}\subseteq \m$, we conclude that $\wt G(\wt K/\wt{\m})$ also has irreducible root system whose structure
constants are invertible in $\wt K$. Thus $\Lambda^{E(\wt K)}\le \wt G(\wt K)$ is subject to the Noetherian case of the theorem.
Hence there exists an ideal $\q(\Lambda)\subseteq \wt K$ such that
$$
E\bigl(\wt K,\q(\Lambda)\bigr)\le \Lambda^{E(\wt K)}\le C\bigl(\wt K,\q(\Lambda)\bigr).
$$
Clearly, $\q(\Lambda)$ is uniquely determined by $\wt K$ and $\Lambda$.
Let $\q$ be the ideal of $K$ generated by all subsets $\q(\Lambda)\subseteq\wt K\subseteq K$. Then, clearly, $H\le C(K,\q)$.
W show that $E_Q(K,\q)\le H$. In order to do that,
it is enough to check that
$U_{Q^\pm}((a))$ is contained in $H$ for any finite $K$-linear combination $a=\sum c_ia_i$ of elements $a_i\in\q(\Lambda_i)$.
Let $\wt K$ be subring of $K$ corresponding to the finite set
$\Lambda=\cup\Lambda_i$, and let $K'$ be the subring of $K$ generated by $\wt K$ and all $c_i$. Then by the Noetherian
case of the theorem applied to $\wt G_{K'}$, we conclude that
$U_{\wt Q^\pm}((a))=U_{Q^\pm}((a))$ are contained in $\Lambda^{E(K')}\le \Lambda^{E(K)}\le H$.

The same argument as above also shows that $E_Q(K,\q)=[G(K,\q),E(K)]$ and $E_P(K,\q)=E_Q(K,\q)$ for any other parabolic subgroup $P$ of $G$, since this
equality holds for each finitely generated subring $\wt K$ such that $P$ is defined over $\wt K$ and the ideal $\wt\q=\wt K\cap \q$.
\end{proof}


\bibliographystyle{amsalpha}
\bibliography{english}

\providecommand{\bysame}{\leavevmode\hbox to3em{\hrulefill}\thinspace}
\providecommand{\MR}{\relax\ifhmode\unskip\space\fi MR }
\providecommand{\MRhref}[2]{%
  \href{http://www.ams.org/mathscinet-getitem?mr=#1}{#2}
}
\providecommand{\href}[2]{#2}
\begin{thebibliography}{BMJP67}

\bibitem[Abe89]{AbeNormal}
E.~Abe, \emph{Normal subgroups of {C}hevalley groups over commutative rings},
  Contemp. Math. \textbf{83} (1989), 1--17.

\bibitem[ABS90]{AzadBarrySeitz}
H.~Azad, M.~Barry, and G.~Seitz, \emph{On the structure of parabolic
  subgroups}, Comm. Algebra \textbf{18} (1990), no.~2, 551--562.

\bibitem[AH88]{AbeHurley}
E.~Abe and J.~F. Hurley, \emph{Centers of {C}hevalley groups over commutative
  rings}, Comm. Algebra \textbf{16} (1988), no.~2, 57--74.

\bibitem[AS76]{AbeSuzuki}
E.~Abe and K.~Suzuki, \emph{On normal subgroups of {C}hevalley groups over
  commutative rings}, Tohoku Math. J. \textbf{28} (1976), no.~2, 185--198.

\bibitem[Bak69]{BakClassical}
A.~Bak, \emph{On modules with quadratic forms}, Lecture Notes in Math.
  \textbf{108} (1969), 55--66.

\bibitem[Bak91]{BakNonabelian}
\bysame, \emph{Nonabelian {K}-theory: {T}he nilpotent class of
  {$\mathrm{K}_{1}$} and general stability}, K-Theory \textbf{4} (1991),
  363--397.

\bibitem[Bas64]{Bass64}
H.~Bass, \emph{{K}-theory and stable algebra}, Publ. Math. Inst. Hautes
  \'Etudes Sci \textbf{22} (1964), 5--60.

\bibitem[Bas73]{BassUnitary}
\bysame, \emph{Unitary algebraic {K}-theory}, Lecture Notes in Math.
  \textbf{343} (1973), 57--265.

\bibitem[Bix80]{Bix80}
Robert Bix, \emph{Octonion planes over local rings}, Trans. Amer. Math. Soc.
  \textbf{261} (1980), no.~2, 417--438. \MR{580896}

\bibitem[BLS64]{BassLazardSerre}
H.~Bass, M.~Lazard, and J.-P. Serre, \emph{Sous-groupes d'indice fini dans
  {${\bf SL}(n,\,{\bf Z})$}}, Bull. Amer. Math. Soc. \textbf{70} (1964),
  385--392. \MR{0161913}

\bibitem[BMJP67]{BassMilnorSerre}
H.~Bass, J.~Milnor, and Serre J.-P., \emph{Solution of the congruence subgroup
  problem for {$\operatorname{SL}_{n}$ $(n\ge 3)$} and {$\operatorname{
  Sp}_{2n}$ $(n\ge 2)$}}, Publ. Math. Inst. Hautes \'Etudes Sci \textbf{33}
  (1967), 59--137.

\bibitem[Bor91]{BorelBook}
Armand Borel, \emph{Linear algebraic groups}, second ed., Graduate Texts in
  Mathematics, vol. 126, Springer-Verlag, New York, 1991. \MR{1102012}

\bibitem[Bou08]{Bourbaki4-6}
N.~Bourbaki, \emph{Elements of mathematics. {L}ie groups and {L}ie algebras.
  {C}hapters 4-6}, Springer-Verlag, Berlin-Heidelberg-New York, 2008.

\bibitem[Bre38]{Brenner38}
Joel Brenner, \emph{The linear homogeneous group}, Ann. of Math. (2)
  \textbf{39} (1938), no.~2, 472--493. \MR{1503419}

\bibitem[Bre44]{Brenner44}
\bysame, \emph{The linear homogeneous group. {II}}, Ann. of Math. (2)
  \textbf{45} (1944), 100--109. \MR{0009956}

\bibitem[Bre60]{Brenner60}
J.~L. Brenner, \emph{The linear homogeneous group. {III}}, Ann. of Math. (2)
  \textbf{71} (1960), 210--223. \MR{0110754}

\bibitem[BT65]{BorelTits}
A.~Borel and J.~Tits, \emph{Groupes r\'eductifs}, Publ. Math. Inst. Hautes
  \'Etudes Sci \textbf{27} (1965), 55--150.

\bibitem[BV85]{BV84}
Z.~I. Borevich and N.~A. Vavilov, \emph{The distribution of subgroups in the
  general linear group over a commutative ring}, Proc. Steklov. Inst. Math.
  \textbf{165} (1985), 27--46.

\bibitem[Che55]{ChevalleySimpGroups}
C.~Chevalley, \emph{Sur certains groupes simples}, Tohoku Math. J. \textbf{7}
  (1955), 14--66.

\bibitem[CK91a]{CoKeSL2}
D.~L. Costa and G.~E. Keller, \emph{The {E(2,A)} sections of {SL(2,A)}}, Ann.
  of Math. $(2)$ \textbf{134} (1991), no.~1, 159--188.

\bibitem[CK91b]{CoKeSp}
\bysame, \emph{Radix redux: normal subgroups of symplectic groups}, J. Reine
  Angew. Math. \textbf{427} (1991), no.~1, 51--105.

\bibitem[CK99]{CoKeG2}
\bysame, \emph{On the normal subgroups of {$G_2(A)$}}, Trans. Amer. Math. Soc.
  \textbf{351} (1999), no.~12, 5051--5088.

\bibitem[DG70]{SGA}
M.~Demazure and A.~Grothendieck, \emph{Sch\'emas en groupes}, Lecture Notes in
  Math. 151--153, Springer-Verlag, Berlin-Heidelberg-New York, 1970.

\bibitem[Die48]{DieuClassiques}
J.~Dieudonn\'e, \emph{Sur les groupes classiques}, Hermann, Paris, 1948.

\bibitem[Gol73]{GolubchikNormal}
I.~Z. Golubchik, \emph{On the general linear group over an associative ring},
  Uspekhi Math Nauk \textbf{28} (1973), no.~3, 179--180 (Russian).

\bibitem[Jan03]{Jantzen}
J.~C. Jantzen, \emph{Representations of algebraic groups, 2nd ed.}, Math.
  Surveys Monogr. \textbf{107}, AMS, 2003.

\bibitem[Kli61a]{KlingenbergGL}
Wilhelm Klingenberg, \emph{Lineare {G}ruppen \"uber lokalen {R}ingen}, Amer. J.
  Math. \textbf{83} (1961), 137--153. \MR{0124412}

\bibitem[Kli61b]{KlingenbergSO}
\bysame, \emph{Orthogonale {G}ruppen \"uber lokalen {R}ingen}, Amer. J. Math.
  \textbf{83} (1961), 281--320. \MR{0124414}

\bibitem[Kli63]{KlingenbergSp}
\bysame, \emph{Symplectic groups over local rings}, Amer. J. Math. \textbf{85}
  (1963), 232--240. \MR{0153749}

\bibitem[KS13]{KulikStav}
E.~Kulikova and A.~Stavrova, \emph{centralizer of the elementary subgroup of an
  isotropic reductive group}, Vestnik St. Petersburg Univ. Math. \textbf{46}
  (2013), no.~1, 22--28.

\bibitem[LS12]{LuzStav}
A.~Luzgarev and A.~Stavrova, \emph{Elementary subgroup of an isotropic
  reductive group is perfect}, St.Petersburg Math. J. \textbf{23} (2012),
  no.~5, 881--890.

\bibitem[Mar79]{Margulis79}
G.~A. Margulis, \emph{Finiteness of quotient groups of discrete subgroups},
  Funktsional. Anal. i Prilozhen. \textbf{13} (1979), no.~3, 28--39.
  \MR{545365}

\bibitem[Mat69]{Matsumoto69}
H.~Matsumoto, \emph{Sur les sous-groupes arithm\'etiques des groupes
  semisimples d\'eploy\'es}, Ann. Sci. \'Ec. Norm. Sup\'er. $(4)$ \textbf{2}
  (1969), no.~1, 1--62.

\bibitem[Men65]{Mennicke-cong}
Jens~L. Mennicke, \emph{Finite factor groups of the unimodular group}, Ann. of
  Math. \textbf{81} (1965), 31--37.

\bibitem[Mil12]{MilneAGS}
J.~S. Milne, \emph{Basic theory of affine group schemes}, Preprint
  \texttt{http://www.jmilne.org/math/CourseNotes/AGS.pdf}, 2012.

\bibitem[Pla69]{Platonov69}
V.~P. Platonov, \emph{The problem of strong approximation and the
  {K}neser-{T}its hypothesis for algebraic groups}, Izv. Akad. Nauk SSSR Ser.
  Mat. \textbf{33} (1969), 1211--1219. \MR{0258839}

\bibitem[Pre18]{Preusser18}
Raimund Preusser, \emph{Sandwich classification for {${\rm GL}_n(R)$},
  {$O_{2n}(R)$} and {$U_{2n}(R,\Lambda)$} revisited}, J. Group Theory
  \textbf{21} (2018), no.~1, 21--44. \MR{3739342}

\bibitem[PS09]{PetStavIso}
V.~Petrov and A.~Stavrova, \emph{Elementary subgroups in isotropic reductive
  groups}, St.Petersburg Math. J. \textbf{20} (2009), no.~4, 625--644.

\bibitem[Rag76]{Ragh76}
M.~S. Raghunathan, \emph{On the congruence subgroup problem}, Inst. Hautes
  \'Etudes Sci. Publ. Math. (1976), no.~46, 107--161. \MR{0507030}

\bibitem[Rag86]{Ragh86}
MS~Raghunathan, \emph{On the congruence subgroup problem, ii}, Inventiones
  mathematicae \textbf{85} (1986), no.~1, 73--117.

\bibitem[Ser70]{Serre-cong}
J.-P. Serre, \emph{Le probl\`eme des groupes de congruence pour ${SL}_2$}, Ann.
  of Math. $(2)$ \textbf{92} (1970), 489--527.

\bibitem[Sta14a]{StavCongruence}
A.~Stavrova, \emph{On the congruence kernel of isotropic groups over rings},
  2014, {P}reprint: \texttt{http://ArXiv.org/abs/1305.0057}.

\bibitem[Sta14b]{StavSerreConj}
A.~K. Stavrova, \emph{Homotopy invariance of non-stable {$K_1$}-functors}, J.
  K-Theory \textbf{13} (2014), 199--248.

\bibitem[Sta16]{StavMulti}
A.~Stavrova, \emph{Non-stable {$K_1$}-functors of multiloop groups}, Canad. J.
  Math. \textbf{68} (2016), 150--178.

\bibitem[Ste93]{StepPolynormal}
A.~V. Stepanov, \emph{On the distribution of subgroups normalized by a fixed
  subgroup}, J. Soviet Math. \textbf{64} (1993), no.~1, 769--776.

\bibitem[Ste16]{StepUniloc}
\bysame, \emph{Structure of {C}hevalley groups over rings via universal
  localization}, J. Algebra \textbf{450} (2016), 522--548.

\bibitem[Ste17]{StepDecomp}
A.~Stepanov, \emph{A new look on decomposition of unipotents and the normal
  structure of {C}hevalley groups}, St.Petersburg Math. J. \textbf{28} (2017),
  no.~3, 411--419.

\bibitem[Sus77]{SuslinSerreConj}
A.~A. Suslin, \emph{On the structure of the special linear group over
  polynomial rings}, Math. USSR. Izvestija \textbf{11} (1977), 221--238.

\bibitem[Tit64]{Tits64}
J.~Tits, \emph{Algebraic and abstract simple groups}, Ann. of Math. \textbf{80}
  (1964), 313--329.

\bibitem[Vas81]{VasersteinGLn}
L.~N. Vaserstein, \emph{On the normal subgroups of {GL$_n$} over a ring},
  Lecture Notes in Math. \textbf{854} (1981), 456--465.

\bibitem[Vas86]{VasersteinChevalley}
\bysame, \emph{On normal subgroups of {C}hevalley groups over commutative
  rings}, Tohoku Math. J. \textbf{38} (1986), 219--230.

\bibitem[Vas88]{Vas88}
\bysame, \emph{Normal subgroups of orthogonal groups over commutative rings},
  Amer. J. Math. \textbf{110} (1988), no.~5, 955--973. \MR{961501}

\bibitem[VS08]{VavStav}
N.~A. Vavilov and A.~K. Stavrova, \emph{Basic reductions in the description of
  normal subgroups}, J. Math. Sci. (N. Y.) \textbf{151} (2008), no.~3,
  2949--2960.

\bibitem[Wil72]{WilsonNormal}
J.~S. Wilson, \emph{The normal and subnormal structure of general linear
  groups}, Math. Proc. Cambridge Philos. Soc. \textbf{71} (1972), 163--177.

\bibitem[Wil09]{WilsonBook}
Robert~A. Wilson, \emph{The finite simple groups}, Graduate Texts in
  Mathematics \textbf{28}, Springer-Verlag, Berlin, New York, 2009.

\bibitem[{You}12]{HongYou12}
{You Hong}, \emph{Subgroups of classical groups normalized by relative
  elementary groups}, J. Pure and Appl. Algebra \textbf{216} (2012),
  1040--1051.

\bibitem[{Zha}10]{ZuhongSubnormal}
{Zhang, Zuhong}, \emph{Subnormal structure of non-stable unitary groups over
  rings}, J. Pure and Appl. Algebra \textbf{214} (2010), no.~5, 622--628.

\end{thebibliography}
\end{document}